\newtheorem{theorem}{Theorem}[section]
\newtheorem{lemma}[theorem]{Lemma}
\newtheorem{proposition}[theorem]{Proposition}
\theoremstyle{definition}
\newtheorem{definition}[theorem]{Definition}
\newtheorem{remark}[theorem]{Remark}
\title[Extensions of expansive dynamical systems]
      {Extensions of expansive dynamical systems}
\author[Mauricio Achigar]{}
\subjclass{Primary: 37B05, 37B65; Secondary: 37B02, 37B25.}
\keywords{Semi expansive system, Semi Anosov system, Lewowicz quotient, Topological stability, Shadowing envelopes.}
\email{machigar@unorte.edu.uy}
\thanks{Partially supported by Agencia Nacional de Investigación e Innovación, Uruguay.}
\theoremstyle{definition}
\newtheorem{example}[theorem]{Example}
\DeclareMathOperator{\diam}{{\rm diam}}
\DeclareMathOperator{\im}{{\rm Im}}
\DeclareMathOperator{\homeo}{{\mathcal H}}
\DeclareMathOperator{\cont}{{\mathcal C}}
\newcommand{\Z}{\mathbb Z}
\newcommand{\N}{\mathbb N}
\newcommand{\U}{\mathscr U}
\newcommand{\V}{\mathscr V}
\newcommand{\C}{\mathscr C}
\newcommand{\D}{\mathscr D}
\newcommand{\Set}{\mathcal S}
\newcommand{\NN}{\mathcal N}
 \newsavebox\myboxA
 \newsavebox\myboxB
 \newlength\mylenA
 \newcommand*\xoverline[2][0.75]{%
  \sbox{\myboxA}{$\m@th#2$}%
  \setbox\myboxB\null
  \ht\myboxB=\ht\myboxA%
  \dp\myboxB=\dp\myboxA%
  \wd\myboxB=#1\wd\myboxA
  \sbox\myboxB{$\m@th\overline{\copy\myboxB}$}
  \setlength\mylenA{\the\wd\myboxA}
  \addtolength\mylenA{-\the\wd\myboxB}%
  \ifdim\wd\myboxB<\wd\myboxA%
  \rlap{\hskip 0.5\mylenA\usebox\myboxB}{\usebox\myboxA}%
  \else
  \hskip -0.5\mylenA\rlap{\usebox\myboxA}{\hskip 0.5\mylenA\usebox\myboxB}%
  \fi}
\begin{document}

\begin{abstract}
 We characterize and describe the extensions of expansive and Ano- sov homeomorphisms on compact spaces. As an application we obtain a stability result for extensions of Anosov systems, and show a construction that embeds any expansive system inside an expansive system having the shadowing property for the pseudo orbits of the original space.
\end{abstract}

\maketitle

\centerline{\scshape Mauricio Achigar}

\medskip

{\footnotesize
 \centerline{Departamento de Matemática y Estadística del Litoral}
 \centerline{Centro Universitario Regional Litoral Norte}
 \centerline{Universidad de la República}
 \centerline{25 de Agosto 281, Salto (50000), Uruguay}}

\bigskip

 \centerline{(Communicated by the associate editor name)}

\section{Introduction}

Let $(M,d)$ be a compact metric space. A homeomorphism $f\colon M\to M$ is \emph{expansive} if there is $\alpha>0$ (an \emph{expansivity constant}) such that 
\begin{center}
 if $x,y\in M$ and $d(f^nx,f^ny)\leq\alpha$ for all $n\in\Z$\quad then\quad  $x=y$. 
\end{center}
In \cite{Le83}*{Lemma 1.1} Lewowicz observed that although we cannot assume that if $f_0\colon M\to M$ is an expansive homeomorphism a perturbed homeomorphism $f$ is expansive, it is true that if $\alpha>0$ is an expansivity constant for $f_0$, $0<\varepsilon<\alpha$, and $f$ is close enough to $f_0$ then $f$ has the following property resembling expansiveness
\begin{equation}\label{ecu:A}
 \text{if $x,y\in M$ and $d(f^nx,f^ny)\leq\alpha$ for all $n\in\Z$\quad  then\quad  $d(x,y)\leq\varepsilon$.}
\end{equation}
Taking $0<\varepsilon\leq\nicefrac\alpha2$ he introduced the equivalence relation on $M$ given by
\begin{equation}\label{ecu:B}
 \text{$x\sim y$\quad iff\quad $d(f^nx,f^ny)\leq\alpha$ for all $n\in\Z$,}
\end{equation}
and studied the properties of the quotient space $M/{\sim}$ and the quotient homeomorphism $\tilde f\colon M/{\sim}\to M/{\sim}$, suggesting that $\tilde f$ is expansive and showing that if $f_0$ is \emph{persistent} then there is a copy of $(M,f_0)$ inside $(M/{\sim},\tilde f)$. In a subsequent paper, \cite{CS}, Cerminara and Sambarino proved in full detail that $\tilde f$ is expansive, and for the case of $M$ a surface they showed that the local stable and unstable sets of the system $(M/{\sim},\tilde f)$ are non-trivial and arc-wise connected.

The aim of this paper is to study the previous situation assuming directly a property of the form (\ref{ecu:A}) for $f$ without supposing the existence of $f_0$ expansive; in fact the existence of such an $f_0$ now turns to be a question. In \S\ref{sec:alphaepsilonexp} we call this type of homeomorphisms \emph{$[\varepsilon,\alpha]$-expansive homeomorphisms} (see Definition \ref{def:alpha_epsilon_exp} for the precise meaning of this term which slightly differs from condition (\ref{ecu:A}) above), proving some properties of this class of systems such as the \emph{uniform $[\varepsilon,\alpha]$-expansiveness} property in Proposition \ref{prop:unif_alplha_epsilon_exp} and an openness result in Proposition \ref{prop:epsilon_alpha_exp_es_abierto}.

For the case $0<\varepsilon\leq\nicefrac\alpha2$ we obtain what we called \emph{semi expansive homeomorphisms} in \S\ref{sec:expansive_quotients}, where, in Theorem \ref{teo:cociente_expnsivo}, we show that for these systems the equivalence relation (\ref{ecu:B}) induces an expansive quotient $\tilde f\colon M/{\sim}\to M/{\sim}$, and conversely that any expansive quotient system of a compact metrizable system, case in which the latter is called an \emph{extension} of the former, arises in that way. In \S\ref{subsec:topolog_charact} we consider this problem from a purely topological viewpoint characterizing the compact and non necessarily metrizable extensions of expansive systems.

In \S\ref{sec:anosov_quotients} we go further establishing necessary and sufficient conditions for a compact metrizable system to have an \emph{Anosov} quotient (an expansive system with the \emph{shadowing property}, see Definition \ref{def:shadowing}), describing all such extensions in Theorem \ref{teo:cociente_anosov} by means of what we called \emph{semi Anosov homeomorphisms} (see Definition \ref{def:semi_anosov}). Finally, in \S\ref{subsec:estabilidad_gorda} and \S\ref{subsec:envovente} we give two applications: a stability result in Theorem \ref{teo:estabilidad}, extending the \emph{topological stability} property of Anosov systems proved by Walters in \cite{Wa78}*{Theorem 4}; and Theorem \ref{teo:envolvente} where we show, loosely speaking, that any expansive system can be enlarged to an expansive system satisfying the shadowing property for the pseudo orbits of the original system (see Definition \ref{def:envelope}).

\section*{Context} Throughout this paper unless otherwise stated $M$ will denote a compact metric space with metric $d$ and $f\colon M\to M$ a homeomorphism.

\section{On $[\varepsilon,\alpha]$-expansiveness}\label{sec:alphaepsilonexp}

In this section we introduce the class of $[\varepsilon,\alpha]$-expan- sive homeomorphisms and state some of their properties. Let us start recalling the definition of expansiveness.

\begin{definition}\label{def:expansivo}
 We say that $f$ is \emph{expansive} iff there exists $\alpha>0$ such that
 \begin{center}
  if $x,y\in M$ and $d(f^nx,f^ny)\leq\alpha$ for all $n\in\Z$ then $x=y$.
 \end{center}
 In that case $\alpha$ is called an \emph{expansivity constant} for $f$, and $f$ is said to be \emph{$\alpha$-expansive}.
\end{definition}

\begin{definition}\label{def:alpha_epsilon_exp}
 Given constants $0<\varepsilon\leq\alpha$ we say that $f$ is \emph{$[\varepsilon,\alpha]$-expansive} iff
 \begin{center}
  $x,y\in M$ and $d(f^nx,f^ny)\leq\alpha$ for all $n\in\Z$ implies $d(x,y)<\varepsilon$.
 \end{center}
 If $f$ is $[\varepsilon,\alpha]$-expansive and $\diam M<\varepsilon$ we say that $f$ is \emph{trivially} $[\varepsilon,\alpha]$-expansive.\footnote{ $\diam M=\max\{d(x,y):x,y\in M\}\in[0,+\infty)$ denotes the diameter of $M$.}
\end{definition}

From the definition we see that if $f$ is $[\varepsilon,\alpha]$-expansive and $d(f^nx,f^ny)\leq\alpha$ for all $n\in\Z$ then $d(f^nx,f^ny)<\varepsilon$ for all $n\in\Z$. It is also easily checked that $f$ is $\alpha$-expansive if and only if $f$ is $[\varepsilon,\alpha]$-expansive for all $\varepsilon>0$ less than $\alpha$.

\begin{lemma}\label{lem:gap_abierto}
 If $f$ is $[\varepsilon,\alpha]$-expansive, $f$ is $[\varepsilon-\delta,\alpha+\delta]$-expansive for some $\delta>0$.
\end{lemma}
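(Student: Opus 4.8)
The plan is to argue by contradiction using a compactness/limiting argument. Suppose the conclusion fails; then for every $\delta>0$ the map $f$ is not $[\varepsilon-\delta,\alpha+\delta]$-expansive, so in particular, taking $\delta=\nicefrac1k$ for each $k\in\N$, there exist points $x_k,y_k\in M$ with $d(f^nx_k,f^ny_k)\leq\alpha+\nicefrac1k$ for all $n\in\Z$ but $d(x_k,y_k)\geq\varepsilon-\nicefrac1k$. By compactness of $M$ (and of $M\times M$), pass to a subsequence so that $x_k\to x$ and $y_k\to y$ for some $x,y\in M$.

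The main point is then to check that the limit pair $(x,y)$ violates $[\varepsilon,\alpha]$-expansiveness. First, for each fixed $n\in\Z$, continuity of $f^n$ gives $f^nx_k\to f^nx$ and $f^ny_k\to f^ny$; passing to the limit in $d(f^nx_k,f^ny_k)\leq\alpha+\nicefrac1k$ yields $d(f^nx,f^ny)\leq\alpha$ for all $n\in\Z$. Similarly, passing to the limit in $d(x_k,y_k)\geq\varepsilon-\nicefrac1k$ gives $d(x,y)\geq\varepsilon$. But $[\varepsilon,\alpha]$-expansiveness of $f$ applied to $x,y$ forces $d(x,y)<\varepsilon$, a contradiction. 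Hence some $\delta>0$ works.

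The only delicate point — and it is quite mild — is making sure the diagonal subsequence extraction is done once and for all so that $x_k\to x$, $y_k\to y$ simultaneously; after that the limiting inequalities for each individual $n$ follow from continuity of the single map $f^n$ (no uniformity in $n$ is needed, since the relation ``$d(f^nx,f^ny)\leq\alpha$ for all $n$'' is a conjunction of closed conditions). I do not expect any real obstacle here; the statement is essentially the assertion that the defining condition of $[\varepsilon,\alpha]$-expansiveness, being a closed-to-open implication on a compact space, has a little room to spare, and the standard ``shrink $\varepsilon$, enlarge $\alpha$'' contradiction argument delivers it directly.
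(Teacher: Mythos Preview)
Your proof is correct and follows essentially the same route as the paper's: a contradiction argument taking $\delta=\nicefrac1k$, extracting convergent subsequences by compactness, and passing to the limit to violate $[\varepsilon,\alpha]$-expansiveness. The only cosmetic difference is that the paper restricts to $k>\nicefrac1\varepsilon$ so that $\varepsilon-\nicefrac1k>0$ and the notion of $[\varepsilon-\nicefrac1k,\alpha+\nicefrac1k]$-expansiveness is literally defined, but this does not affect the argument.
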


\begin{proof}
 If the thesis were false then for all $k\in\N$, $k>\nicefrac1\varepsilon$, we have that $f$ is not $[\varepsilon-\nicefrac1k,\alpha+\nicefrac1k]$-expansive, that is, $d(f^nx_k,f^ny_k)\leq\alpha+\nicefrac1k$ for all $n\in\Z$ and $d(x_k,y_k)\geq\varepsilon-\nicefrac1k$ for suitable $x_k,y_k\in M$. As $M$ is compact there exist limit points $x$ and $y$ of the sequences $(x_k)$ and $(y_k)$, respectively, which verify $d(f^nx,f^ny)\leq\alpha$ for all $n\in\Z$ and $d(x,y)\geq\varepsilon$, contradicting the $[\varepsilon,\alpha]$-expansiveness of $f$.
\end{proof}

\begin{definition}\label{def:unif_alpha_epsilon_exp}
  We say that $f$ is \emph{uniformly $[\varepsilon,\alpha]$-expansive} iff for some $N\in\N$
 \begin{center}
  $x,y\in M$ and $d(f^nx,f^ny)\leq\alpha$ for all $|n|\leq N$ implies $d(x,y)<\varepsilon$,
 \end{center}
 where $0<\varepsilon\leq\alpha$. 
\end{definition}

Clearly, if $f$ is uniformly $[\varepsilon,\alpha]$-expansive then $f$ is $[\varepsilon,\alpha]$-expansive. The converse is also true as we show in the next result which is similar to \cite{Br}*{Theorem 5}, where Bryant proves the property of \emph{uniform expansiveness} for expansive homeomorphisms on compact spaces.

\begin{proposition}\label{prop:unif_alplha_epsilon_exp}
 If $f$ is $[\varepsilon,\alpha]$-expansive then $f$ is uniformly $[\varepsilon,\alpha]$-expansive.
\end{proposition}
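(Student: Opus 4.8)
The plan is to argue by contradiction, mimicking the compactness argument that Bryant uses for uniform expansiveness and that already appears in the proof of Lemma \ref{lem:gap_abierto}. So suppose $f$ is $[\varepsilon,\alpha]$-expansive but not uniformly $[\varepsilon,\alpha]$-expansive. Then for every $N\in\N$ the defining implication fails, i.e.\ there are points $x_N,y_N\in M$ with $d(f^nx_N,f^ny_N)\leq\alpha$ for all $|n|\leq N$, yet $d(x_N,y_N)\geq\varepsilon$. The goal is to extract from these a single pair $x,y$ witnessing a failure of $[\varepsilon,\alpha]$-expansiveness for all $n\in\Z$.

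The key step is the compactness extraction. Since $M\times M$ is compact, pass to a subsequence along which $(x_N,y_N)\to(x,y)$. I would then fix an arbitrary $m\in\Z$; for all sufficiently large $N$ in the subsequence we have $|m|\leq N$, hence $d(f^mx_N,f^my_N)\leq\alpha$, and letting $N\to\infty$ along the subsequence, continuity of $f^m$ and of $d$ gives $d(f^mx,f^my)\leq\alpha$. Since $m$ was arbitrary this holds for all $m\in\Z$. On the other hand $d(x_N,y_N)\geq\varepsilon$ for all $N$ passes to the limit to give $d(x,y)\geq\varepsilon$. This contradicts $[\varepsilon,\alpha]$-expansiveness of $f$, which forces $d(x,y)<\varepsilon$ whenever the orbits stay $\alpha$-close for all integer times. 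Hence $f$ must be uniformly $[\varepsilon,\alpha]$-expansive after all.

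The only subtlety, and the one place to be slightly careful, is the order of quantifiers: one must first choose the convergent subsequence of pairs and only afterwards let $m$ range over $\Z$, using that each $f^m$ is (uniformly) continuous on the compact space $M$; a diagonal argument is not needed because a single subsequence works for every $m$ simultaneously once the limit pair is fixed. I do not expect any genuine obstacle here — it is the standard ``failure at every finite scale yields failure in the limit'' argument — but the statement is worth recording because the uniform version is what will be needed in the later sections to pass between $f$ and nearby homeomorphisms.
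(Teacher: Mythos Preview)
Your argument is correct and essentially identical to the paper's own proof: both proceed by contradiction, extract limit points $x,y$ of the sequences $(x_N),(y_N)$ via compactness, and check that the limit pair violates $[\varepsilon,\alpha]$-expansiveness. The paper is slightly terser (it just says ``taking limit points''), but your added remark on the order of quantifiers is harmless commentary, not a different approach.
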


\begin{proof}
 If $f$ were not uniformly $[\varepsilon,\alpha]$-expansive then for all $N\in\N$ there exist $x_N,y_N\in M$ such that $d(f^nx_N,f^ny_N)\leq\alpha$ for all $|n|\leq N$ and $d(x_N,y_N)\geq\varepsilon$. Taking limit points $x$ and $y$ of the sequences $(x_N)$ and $(y_N)$, respectively, we have that $d(f^nx,f^ny)\leq\alpha$ for all $n\in\Z$ and $d(x,y)\geq\varepsilon$, which can not be true because $f$ is $[\varepsilon,\alpha]$-expansive. 
\end{proof}

For topological spaces $M$ and $N$ we denote as $\cont(M,N)$ the set of continuous functions from $M$ to $N$ with the \emph{compact-open topology}. If $M$ is compact and $N$ is a metric space this topology is the one induced by the \emph{$C^0$-metric} defined as
$$
d_{C^0}(f,g)=\max\{d(fx,gx):x\in M\}\quad\text{if }f,g\in\cont(M,N).
$$
The subspace of homeomorphisms in $\cont(M,M)$ will be denoted $\homeo(M)$.

\medskip

The following result extends \cite{Le83}*{Lemma 1.1} proved by Lewowicz for the case of an expansive homeomorphism on a compact space.

\begin{proposition}\label{prop:epsilon_alpha_exp_es_abierto}
 The set of $[\varepsilon,\alpha]$-expansive homeomorphisms is open in $\homeo(M)$.
\end{proposition}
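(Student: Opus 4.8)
The plan is to trade the bi-infinite condition in the definition of $[\varepsilon,\alpha]$-expansiveness for a condition on finitely many iterates, using Lemma \ref{lem:gap_abierto} to create a margin and Proposition \ref{prop:unif_alplha_epsilon_exp} to make the condition finite; openness then follows because a homeomorphism $g$ which is $C^0$-close to $f$ has $g^n$ close to $f^n$ for each of the finitely many relevant values of $n$.

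In detail, starting from a $[\varepsilon,\alpha]$-expansive $f$ I would first apply Lemma \ref{lem:gap_abierto} to obtain $\delta>0$ with $f$ $[\varepsilon-\delta,\alpha+\delta]$-expansive, and then Proposition \ref{prop:unif_alplha_epsilon_exp} to obtain $N\in\N$ such that $d(f^nx,f^ny)\le\alpha+\delta$ for all $|n|\le N$ implies $d(x,y)<\varepsilon-\delta$. As the required neighbourhood of $f$ I would take
$$ \U=\{\,g\in\homeo(M): d_{C^0}(f^n,g^n)<\delta/2\text{ for all }|n|\le N\,\}. $$
If $g\in\U$ and $x,y\in M$ satisfy $d(g^nx,g^ny)\le\alpha$ for all $n\in\Z$, then for each $|n|\le N$ the triangle inequality gives
$$ d(f^nx,f^ny)\le d(f^nx,g^nx)+d(g^nx,g^ny)+d(g^ny,f^ny)\le\tfrac\delta2+\alpha+\tfrac\delta2=\alpha+\delta, $$
hence $d(x,y)<\varepsilon-\delta<\varepsilon$, so $g$ is $[\varepsilon,\alpha]$-expansive; this shows $\U$ lies inside the set of $[\varepsilon,\alpha]$-expansive homeomorphisms.

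It remains to check that $\U$ is a neighbourhood of $f$. Since $M$ is compact metric the topology on $\homeo(M)$ is induced by $d_{C^0}$, and for each fixed $n\in\Z$ the map $g\mapsto g^n$ is continuous on $\homeo(M)$: for $n\ge 0$ this is continuity of composition, and for $n<0$ it also uses continuity of inversion in the compact-open topology — a classical fact for $M$ compact Hausdorff, and the third place (after Lemma \ref{lem:gap_abierto} and Proposition \ref{prop:unif_alplha_epsilon_exp}) where compactness of $M$ is essential. Because $\U$ constrains only the finitely many $n$ with $|n|\le N$, it is a finite intersection of preimages under these continuous maps of the open $d_{C^0}$-balls of radius $\delta/2$ centred at $f^n$, hence open, and clearly $f\in\U$. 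The only step that is not a routine triangle-inequality estimate is this continuity of $g\mapsto g^n$ for negative $n$, i.e. of inversion; everything else is bookkeeping with the constants $\delta$ and $N$ supplied by the two preceding results.
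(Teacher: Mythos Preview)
Your proof is correct and follows essentially the same route as the paper: use Lemma \ref{lem:gap_abierto} to gain a margin $\delta$, apply Proposition \ref{prop:unif_alplha_epsilon_exp} to reduce to finitely many iterates $|n|\le N$, and then use continuity of $g\mapsto g^n$ on $\homeo(M)$ together with the triangle inequality. The only cosmetic difference is that the paper extracts from Lemma \ref{lem:gap_abierto} merely that $f$ is $[\varepsilon,\alpha+\delta]$-expansive (discarding the $\varepsilon-\delta$), whereas you keep the full $[\varepsilon-\delta,\alpha+\delta]$ and then observe $\varepsilon-\delta<\varepsilon$ at the end; both work equally well.
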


\begin{proof}
 Given $0<\varepsilon\leq\alpha$ suppose $f\in\homeo(M)$ is $[\varepsilon,\alpha]$-expansive. By Lemma \ref{lem:gap_abierto} there exists $\delta>0$ such that $f$ is $[\varepsilon,\alpha+\delta]$-expansive, and then by Proposition \ref{prop:unif_alplha_epsilon_exp} $f$ is uniformly $[\varepsilon,\alpha+\delta]$-expansive, that is, for some $N\in\N$ we have
 $$
 \text{if }x,y\in M\text{ and }d(f^nx,f^ny)\leq\alpha+\delta\text{ for all }|n|\leq N\text{ then }d(x,y)<\varepsilon.
 $$
 As the map $\homeo(M)\to\homeo(M)$, $g\mapsto g^n$ is continuous for all $n\in\Z$ we can find a neighborhood $\NN$ of $f$ in $\homeo(M)$ such that $d_{C^0}(f^n,g^n)\leq\nicefrac\delta2$ for all $g\in\mathcal N$ and $|n|\leq N$. Hence, if $g\in\mathcal N$, $x,y\in M$ and $d(g^nx,g^ny)\leq\alpha$ for all $|n|\leq N$, we have
 $$
 d(f^nx,f^ny)\leq d(f^nx,g^nx)+d(g^nx,g^ny)+d(g^ny,f^ny)\leq\nicefrac\delta2+\alpha+\nicefrac\delta2=\alpha+\delta
 $$
 for all $|n|\leq N$, which implies $d(x,y)<\varepsilon$. Then every $g\in\mathcal N$ is $[\varepsilon,\alpha]$-expansive.
\end{proof}

\section{Expansive quotients}\label{sec:expansive_quotients}

In this section we introduce the class of \emph{semi expansive} homeomorphisms to give necessary and sufficient conditions for a quotient of $f$ to be expansive, describing all compact metrizable extensions of an expansive dynamical system in Theorem \ref{teo:cociente_expnsivo}. In \S\ref{subsec:topolog_charact} we obtain a topological version of this theorem, characterizing the compact and non necessarily metrizable extensions of expansive homeomorphisms in Theorem \ref{teo:cocienteexpansivo3}.

\begin{definition}\label{def:semi_expansivo}
 Given a constant $\alpha>0$ we say that $f$ is \emph{$\alpha$-semi expansive} iff $f$ is $[\nicefrac\alpha2,\alpha]$-expansive, that is,
 \begin{center}
  $x,y\in M$ and $d(f^nx,f^ny)\leq\alpha$ for all $n\in\Z$ implies $d(x,y)<\nicefrac\alpha2$.
 \end{center}
 In that case $\alpha$ is called a \emph{semi expansivity constant} for $f$.  
\end{definition}

The next result is a direct consequence of Proposition \ref{prop:epsilon_alpha_exp_es_abierto}.

\begin{proposition}\label{prop:semi_exp_abierto}
 The set of $\alpha$-semi expansive homeomorphisms is open in $\homeo(M)$.
\end{proposition}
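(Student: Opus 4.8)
The plan is to simply unwind the definition and quote the openness result already established for $[\varepsilon,\alpha]$-expansive homeomorphisms. By Definition \ref{def:semi_expansivo}, a homeomorphism $f\in\homeo(M)$ is $\alpha$-semi expansive precisely when it is $[\nicefrac\alpha2,\alpha]$-expansive. Hence the set of $\alpha$-semi expansive homeomorphisms coincides, as a subset of $\homeo(M)$, with the set of $[\varepsilon,\alpha]$-expansive homeomorphisms for the particular choice $\varepsilon=\nicefrac\alpha2$ (which indeed satisfies $0<\varepsilon\leq\alpha$, so Definition \ref{def:alpha_epsilon_exp} applies).

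Applying Proposition \ref{prop:epsilon_alpha_exp_es_abierto} with this value of $\varepsilon$, the set of $[\nicefrac\alpha2,\alpha]$-expansive homeomorphisms is open in $\homeo(M)$, and therefore so is the set of $\alpha$-semi expansive homeomorphisms. No further argument is needed: there is no real obstacle here, since all the work (the gap-widening of Lemma \ref{lem:gap_abierto}, the uniformity of Proposition \ref{prop:unif_alplha_epsilon_exp}, and the $C^0$-perturbation estimate) has already been carried out in the proof of Proposition \ref{prop:epsilon_alpha_exp_es_abierto}. One could, if desired, spell out that a neighborhood $\NN$ of $f$ witnessing $[\nicefrac\alpha2,\alpha]$-expansiveness of every $g\in\NN$ is exactly a neighborhood witnessing $\alpha$-semi expansiveness of every $g\in\NN$, but this is immediate from the definitions and is what makes the result "a direct consequence" as announced in the text preceding the statement.
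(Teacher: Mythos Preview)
Your proof is correct and matches the paper's approach exactly: the paper states that the result is a direct consequence of Proposition \ref{prop:epsilon_alpha_exp_es_abierto} and gives no further argument, which is precisely what you do by identifying $\alpha$-semi expansiveness with $[\nicefrac\alpha2,\alpha]$-expansiveness via Definition \ref{def:semi_expansivo}.
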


For a topological space $M$ and an equivalence relation $R\subseteq M\times M$ we denote $x\sim y$ if $x,y\in M$ are equivalent elements under $R$, $[x]$ the equivalence class of $x\in M$, $M_R=M/R$ the topological quotient space, and $q\colon M\to M_R$, $q(x)=[x]$ if $x\in M$, the canonical map. We say that a subset $A\subseteq M$ is \emph{saturated} iff $A$ contains the class of each of his members. For a homeomorphism $f\colon M\to M$ we say that $R$ is \emph{compatible} with $f$ iff $x,y\in M$ and $x\sim y$ implies $fx\sim fy$. In that case the induced homeomorphism $f_R\colon M_R\to M_R$ given by $f_R[x]=[fx]$ if $x\in M$, will be called the \emph{quotient} of $f$ by $R$. We also say that $f$ is an \emph{extension} of $f_R$.

\begin{definition}\label{def:rel_quiv_de_Lew}
 If $f$ is $\alpha$-semi expansive we define $R(d,\alpha)$ as the equivalence relation on $M$ compatible with $f$ given by
 \begin{center}
  $x\sim y$\quad iff\quad $d(f^nx,f^ny)\leq\alpha$ for all $n\in\Z$,
 \end{center}
 if $x,y\in M$. For a metrizable space $M$, a homeomorphism $f\colon M\to M$ and an equivalence relation $R$ compatible with $f$, we say that the induced quotient homeomorphism $f_R$ is a \emph{Lewowicz quotient} of $f$ iff there exists a compatible metric $d$ on $M$, in the sense that $d$ induces topology of $M$, and $\alpha>0$ such that $f$ is $\alpha$-semi expansive and $R=R(d,\alpha)$.
\end{definition}

Next, we recall the following definition from \cite{Nadler}*{Definition 3.5}.

\begin{definition}\label{def:usd}
 A \emph{decomposition} (also called \emph{partition}) of a topological space $M$, that is a collection $\D$ of pairwise disjoint subsets of $M$ such that $\bigcup\D=M$, is said to be an \emph{upper semi continuous decomposition} iff for every $D\in\D$ and every neighborhood $U$ of $D$ there exists a neighborhood $V$ of $D$ such that if $D'\in\D$ and $D'\cap V\neq\varnothing$ then $D'\subseteq U$.
\end{definition}

The following result is already proved in \cite{CS}*{Remark p.\,323}.

\begin{lemma}\label{lem:cociente_metrizable}
 If $f$ is $\alpha$-semi expansive and $R=R(d,\alpha)$ then $M_R$ is metrizable.
\end{lemma}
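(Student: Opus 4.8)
The plan is to recognize the decomposition of $M$ into the equivalence classes of $R=R(d,\alpha)$ as an upper semicontinuous decomposition into compact sets, and then to invoke the classical fact that the quotient of a compact metrizable space by such a decomposition is again compact metrizable (see \cite{Nadler}).

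The first step would be to observe that $R$ is a closed subset of $M\times M$. Indeed, if $(x_k,y_k)\to(x,y)$ in $M\times M$ with $x_k\sim y_k$ for every $k$, then for each fixed $n\in\Z$ the continuity of $f^n$ gives $f^nx_k\to f^nx$ and $f^ny_k\to f^ny$, whence $d(f^nx,f^ny)=\lim_k d(f^nx_k,f^ny_k)\leq\alpha$; since $n$ is arbitrary, $x\sim y$. In particular each class $[x]=\{y\in M:(x,y)\in R\}$ is closed in $M$, hence compact. Since $R$ is closed and $M$ is compact Hausdorff, $M_R$ is compact Hausdorff, and consequently the canonical map $q$ is a closed map.

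Next I would verify upper semicontinuity of the decomposition $\D=\{[x]:x\in M\}$ in the sense of Definition \ref{def:usd}. Given $D=[x]\in\D$ and an open set $U\supseteq D$, the set $q^{-1}\bigl(q(M\setminus U)\bigr)$ is closed (because $q$ is continuous and closed) and disjoint from $D$ (because $D$ is saturated and $D\subseteq U$); therefore $V=M\setminus q^{-1}\bigl(q(M\setminus U)\bigr)$ is an open, saturated set with $D\subseteq V$, and any $D'\in\D$ meeting $V$ satisfies $D'\subseteq V\subseteq U$. This produces the neighborhood required by the definition, so $\D$ is upper semicontinuous.

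With these two steps in place, $M_R=M/\D$ is metrizable by the decomposition theorem of \cite{Nadler}. The point that deserves care is precisely this final invocation: a topological quotient of a second countable space need not be second countable, so the metrizability of $M_R$ genuinely relies on the upper semicontinuity established above (equivalently, on $q$ being a closed surjection with compact fibers, via the Hanai--Morita--Stone metrization theorem); once upper semicontinuity is secured, however, the conclusion is classical.
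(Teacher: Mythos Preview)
Your argument is correct and lands on the same endpoint as the paper: verify that the decomposition into $R$-classes is upper semicontinuous and then invoke \cite{Nadler}*{Theorem 3.9}. The difference is in how upper semicontinuity is obtained. You first observe that $R$ is closed in $M\times M$, deduce that $M_R$ is Hausdorff (hence $q$ is closed), and then read off upper semicontinuity formally from closedness of $q$. The paper instead proves upper semicontinuity by a direct sequential compactness argument: assuming it fails, it builds sequences $x_n\in[x]$, $y_n$, $z_n\in[y_n]\setminus U$ and passes to limits $\bar x=y$, $z$, using closedness of $R$ together with transitivity to conclude $z\in[x]\subseteq U$, a contradiction. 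Your route is cleaner and more general---it works verbatim for any closed equivalence relation on a compact metric space, and isolates the only place semi-expansiveness enters (namely, to make $R(d,\alpha)$ transitive). The paper's route is more self-contained, avoiding the citation of the ``$R$ closed $\Rightarrow$ $M_R$ Hausdorff'' theorem for compact spaces, at the cost of a hands-on argument tailored to this particular relation.
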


\begin{proof}
 By \cite{Nadler}*{Theorem 3.9} it is enough to show that the decomposition of $M$ into equivalence classes is upper semi continuous, that is, for every class $[x]\subseteq M$ and every open set $U\subseteq M$ such that $[x]\subseteq U$ there exists an open set $V\subseteq M$ such that $[x]\subseteq V$ and $[y]\subseteq U$ for all $y\in V$. Suppose on the contrary that there exist a class $[x]\subseteq X$ and an open set $U\subseteq M$, $[x]\subseteq U$, such that for all open sets $V\subseteq M$, $[x]\subseteq V$, there exists $y\in V$ such that $[y]\not\subseteq U$. Then, taking $V=B_{1/n}\bigl([x]\bigr)$ for $n\in\N$, where $B_\varepsilon(A)=\bigcup_{x\in A}B_\varepsilon(x)$ if $\varepsilon>0$ and $A\subseteq M$, we have that for each $n\in\N$ there exists $y_n\in B_{1/n}\bigl([x]\bigr)$ such that $[y_n]\not\subseteq U$.

 Given  $n\in\N$ let $x_n\in[x]$ be such that $d(x_n,y_n)<\nicefrac1n$ and $z_n\in[y_n]$ such that $z_n\notin U$. As $M$ is compact we can assume that $x_n\to\bar x$, $y_n\to y$ and $z_n\to z$. As $x_n\sim x$ for all $n\in\N$ we have $d(f^kx_n,f^kx)\leq\alpha$ for all $k\in\Z$, then taking limit when $n\to+\infty$ we get $d(f^k\bar x,f^kx)\leq\alpha$ for all $k\in\Z$, that is, $\bar x\sim x$. On the other hand, as $y_n\sim z_n$ for all $n\in\N$ we have $d(f^ky_n,f^kz_n)\leq\alpha$ for all $k\in\Z$ and then taking limit when $n\to+\infty$ we conclude that $d(f^ky,f^kz)\leq\alpha$ for all $k\in\Z$, that is $y\sim z$. Finally, as $d(x_n,y_n)<\nicefrac1n$ for all $n\in\N$ we deduce $\bar x=y$. To sum up we proved $x\sim\bar x=y\sim z$, hence $z\in U$ because $[x]\subseteq U$. This contradicts that $z_n\notin U$ for all $n\in\N$ because in that case $z\notin U$ since $U$ is open and $z_n\to z$.
\end{proof}

\begin{remark}\label{obs:diam_clases}
 As a consequence of Lemma \ref{lem:cociente_metrizable} we see that the equivalence classes of the relation $R(d,\alpha)$ of Definition \ref{def:rel_quiv_de_Lew} are compact subsets of $M$. Moreover, if we know that $f$ is $[\varepsilon,\alpha]$-expansive and $\varepsilon\leq\nicefrac\alpha2$ then clearly $f$ is $\alpha$-semi expansive, and the equivalence classes has diameter $\diam([x])<\varepsilon$ if $x\in M$.
\end{remark}

\begin{remark}\label{obs:ugorro_abierto}
 For an equivalence relation $R$ on a topological space $M$, if the canonical map $q\colon M\to M_R$ is closed (maps closed sets to closed sets) then the sets
 $$
 \widehat U=\{x\in M:[x]\subseteq U\}\subseteq M\quad\text and\quad q(\widehat U)\subseteq M_R
 $$
 are open whenever $U\subseteq M$ is open. Indeed, for the openness of $\widehat U$ note that $M\setminus\widehat U=q^{-1}\bigl(q(M\setminus U)\bigr)$, and for $q(\widehat U)$ use that $\widehat U$ is saturated and open. By Lemma \ref{lem:cociente_metrizable} this is true in particular when $R=R(d,\alpha)$ is as in Definition \ref{def:rel_quiv_de_Lew}, because in that case as $M$ is compact and $M_R$ is a Hausdorff space then $q$ is closed.
\end{remark}

\begin{definition}\label{def:cub_exp}
 We say that an open cover $\U$ of $M$ is an \emph{expansivity cover} for $f$ iff $x,y\in M$ and $\{f^nx,f^ny\}\prec\U$ for all $n\in\Z$ implies $x=y$, where the notation $A\prec\U$ means that $A\subseteq U$ for some $U\in\U$.
\end{definition}

For later reference we state the following easy result.

\begin{lemma}\label{lem:cubrimiento_exp}
 $f$ is expansive iff there exists an expansivity cover $\U$ for $f$.
\end{lemma}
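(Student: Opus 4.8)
The plan is to prove both implications of Lemma~\ref{lem:cubrimiento_exp} by translating between the metric notion of expansivity constant and the combinatorial notion of expansivity cover, using compactness of $M$ to pass from one to the other.

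\medskip

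\textbf{From an expansivity constant to an expansivity cover.} Suppose $f$ is $\alpha$-expansive. I would take $\U$ to be the open cover of $M$ by all open balls of radius $\alpha/2$, i.e. $\U=\{B_{\alpha/2}(x):x\in M\}$. If $x,y\in M$ satisfy $\{f^nx,f^ny\}\prec\U$ for all $n\in\Z$, then for each $n$ both $f^nx$ and $f^ny$ lie in a common ball of radius $\alpha/2$, so $d(f^nx,f^ny)\leq\alpha$ for all $n\in\Z$; expansiveness then forces $x=y$. Hence $\U$ is an expansivity cover.

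\medskip

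\textbf{From an expansivity cover to an expansivity constant.} Suppose $\U$ is an expansivity cover for $f$. Since $M$ is compact, $\U$ has a Lebesgue number $\alpha>0$: every subset of $M$ of diameter at most $\alpha$ is contained in some member of $\U$. I claim $\alpha$ is an expansivity constant. Indeed, if $d(f^nx,f^ny)\leq\alpha$ for all $n\in\Z$, then the two-point set $\{f^nx,f^ny\}$ has diameter $\leq\alpha$, hence $\{f^nx,f^ny\}\prec\U$ for every $n$; since $\U$ is an expansivity cover this gives $x=y$. Thus $f$ is $\alpha$-expansive.

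\medskip

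Neither direction presents a real obstacle; the only point requiring a little care is invoking the Lebesgue number lemma, which needs $M$ compact (true here by the standing assumption in the Context section) and metric. One should also note the trivial degenerate case where $M$ is a single point, in which expansiveness and the existence of an expansivity cover both hold vacuously, so the statement is not affected.
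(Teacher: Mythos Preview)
Your proof is correct; the paper in fact omits the proof entirely, labeling the lemma an ``easy result'' and leaving it to the reader. The argument you give---open $\alpha/2$-balls in one direction, Lebesgue number in the other---is precisely the standard verification one would supply.
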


In \cite{Le83}*{p. 568} it is suggested that what we called Lewowicz quotient (Definition \ref{def:rel_quiv_de_Lew}) in honor of the author of that article is in fact an expansive homeomorphism. Full details and proofs of this assertion was given later in \cite{CS}, where the authors construct a metric in the quotient space with respect to which the quotient homeomorphism is expansive. In the following result we give an alternative proof of this fact with a shorter but non constructive argument. We also prove the converse of the assertion showing that every expansive quotient is a Lewowicz quotient.

\begin{theorem}\label{teo:cociente_expnsivo}
 Let $M$ be a compact metrizable space, $f\colon M\to M$ a homeomorphism and $f_R\colon M_R\to M_R$ the quotient of $f$ by a compatible equivalence relation $R$ defined on $M$. Then the following conditions are equivalent.
 \begin{enumerate}
  \item $f_R$ is expansive.\footnote{The property of expansiveness for a homeomorphism of a compact metrizable space does not depend on the chosen compatible metric. See for example \cite{AAM}*{Remark 2.10} or apply Lemma \ref{lem:cubrimiento_exp}.} 
  \item $f_R$ is a Lewowicz quotient (Definition \ref{def:rel_quiv_de_Lew}).
 \end{enumerate}
\end{theorem}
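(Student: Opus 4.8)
The plan is to prove the two implications separately. For $(2)\Rightarrow(1)$, assume $f_R$ is a Lewowicz quotient, so there is a compatible metric $d$ on $M$ and $\alpha>0$ with $f$ $\alpha$-semi expansive and $R=R(d,\alpha)$. I would construct an expansivity cover for $f_R$ and invoke Lemma \ref{lem:cubrimiento_exp}. By Lemma \ref{lem:gap_abierto} (applied with $\varepsilon=\nicefrac\alpha2$) there is $\delta>0$ with $f$ $[\nicefrac\alpha2,\alpha+2\delta]$-expansive, hence in particular $[\nicefrac\alpha2-\delta,\alpha+\delta]$-expansive; set $\varepsilon'=\nicefrac\alpha2-\delta$ and $\alpha'=\alpha+\delta$, so $x\sim y$ (under $R(d,\alpha)$, equivalently under $R(d,\alpha')$ since classes have diameter $<\varepsilon'\le\nicefrac{\alpha'}2$) whenever $d(f^nx,f^ny)\le\alpha'$ for all $n$. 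Consider the open cover of $M_R$ by the sets $q(\widehat{B_{\delta}(x)})$ (which are open in $M_R$ by Remark \ref{obs:ugorro_abierto}), extract a finite subcover, and let $\U$ be the resulting finite open cover of $M_R$; I claim $\U$ is an expansivity cover. The key point: if $\{f_R^n[x],f_R^n[y]\}\prec\U$ for all $n$, then for each $n$ there is a point $z_n\in M$ with $B_\delta(z_n)\supseteq$ the classes $[f^nx]$ and $[f^ny]$, so representatives can be chosen with $d(f^nx,f^ny)<2\delta\le\alpha'$ — but one must be careful that the representatives are $f^n$ of the original points. This is handled by the standing observation that $R(d,\alpha)$-classes are exactly preserved under $f$, so $d(f^n x', f^n y')\le\alpha$ for the chosen representatives $x'\in[x],y'\in[y]$ does not immediately follow; instead I would argue directly that $\diam([f^nx]\cup[f^ny])<2\delta+\text{(class diameters)}$, giving $d(f^nx,f^ny)\le\alpha'$ for the original orbit points, whence $x\sim y$, i.e. $[x]=[y]$.

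For $(1)\Rightarrow(2)$, assume $f_R$ is expansive. First, since $M_R$ is a quotient of a compact metrizable space by an equivalence relation making it Hausdorff (expansiveness presupposes a metric), $M_R$ is compact metrizable; fix a compatible metric $\rho$ on $M_R$ with expansivity constant $4\beta$ for $f_R$, so that $f_R$ is $2\beta$-semi expansive. The canonical map $q$ is then closed, hence the sets $\widehat U$ are available (Remark \ref{obs:ugorro_abierto}). The strategy is to build a compatible metric $d$ on $M$ and choose $\alpha>0$ so that $f$ is $\alpha$-semi expansive and $R=R(d,\alpha)$. The natural candidate is a metric of the form $d(x,y)=\rho(q(x),q(y))+e(x,y)$ where $e$ is a metric on $M$ compatible with its topology whose restriction to each fiber controls the fiber's diameter — but the subtlety is that distinct fibers must also be $e$-far enough apart relative to $\rho$-distance; I would instead take $d$ compatible, bounded by $1$ say, scaled so that $\sup_{x}\diam([x])$ in the $d$-metric is as small as we like on a neighborhood-by-neighborhood basis, using upper semicontinuity of the decomposition (Lemma \ref{lem:cociente_metrizable}). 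More precisely, because $q$ is a closed map between compact metric spaces, for every $\eta>0$ the set of classes of $d$-diameter $\ge\eta$ is "bounded away" and one can choose $\alpha$ with $\nicefrac\alpha2$ strictly between the maximal class diameter and the $d$-separation threshold forced by the expansivity of $f_R$: concretely, choose a compatible metric $d$ on $M$ and then, after possibly replacing $d$ by $c\cdot d + \rho\circ(q\times q)$ for suitable $c>0$, arrange that (i) $d(f^nx,f^ny)\le\alpha$ for all $n$ forces $\rho(q f^n x, q f^n y)\le 4\beta$ for all $n$, hence $[x]=[y]$ by expansiveness of $f_R$, so distinct classes are $d$-separated by more than $\alpha$ uniformly (this is where compactness and closedness of $q$ enter), and (ii) every class has $d$-diameter $<\nicefrac\alpha2$. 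Property (ii) makes $f$ $\alpha$-semi expansive and together with (i) gives $R(d,\alpha)=R$.

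The main obstacle I anticipate is the construction in $(1)\Rightarrow(2)$: one needs a single compatible metric $d$ on $M$ and a single constant $\alpha$ such that the $\alpha$-tube in $d$, intersected with orbits, collapses exactly onto $R$-classes — neither bigger (which would fail if some pair of $R$-inequivalent points stayed $d$-close along the orbit, ruled out only by a uniform separation of fibers that must be engineered into $d$) nor effectively smaller in a way that breaks $R=R(d,\alpha)$. The clean way to get the uniform fiber separation is: the map $M\times M\to[0,\infty)$, $(x,y)\mapsto\sup_n\rho(qf^nx,qf^ny)$ is lower semicontinuous, equals $0$ exactly on $R$, and by compactness is bounded below by some $\gamma>0$ off any neighborhood of the diagonal-closure $R$; combining this with a compatible metric $d$ on $M$ for which $R$-classes are uniformly $d$-small (available since the decomposition is upper semicontinuous and $M$ compact) lets us pick $\alpha$ threading the two scales. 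The rest — checking $f$ is $\alpha$-semi expansive, i.e. $[\nicefrac\alpha2,\alpha]$-expansive, and that $R(d,\alpha)=R$ — is then a direct verification using that classes are $f$-invariant.
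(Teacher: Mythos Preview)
Your overall strategy in both directions matches the paper's, but the $(2)\Rightarrow(1)$ argument has a genuine gap.

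In $(2)\Rightarrow(1)$ you propose to cover $M_R$ by the sets $q(\widehat{B_\delta(x)})$, where $\delta$ is the constant produced by Lemma~\ref{lem:gap_abierto}. For this family to cover $M_R$ you need every class $[x]$ to be contained in some $\delta$-ball, which forces $\diam[x]<2\delta$. But all you know is $\diam[x]<\nicefrac\alpha2$ (or $<\nicefrac\alpha2-\delta$ after your refinement), and Lemma~\ref{lem:gap_abierto} gives no lower bound on $\delta$: the compactness argument there may well produce a $\delta$ far smaller than the class diameters, in which case no class fits in any $\delta$-ball and your family is not a cover at all. The repair is to skip the detour through Lemma~\ref{lem:gap_abierto} and use radius $\nicefrac\alpha2$ instead: since $\diam[x]<\nicefrac\alpha2$ (Remark~\ref{obs:diam_clases}) we have $x\in\widehat{B_{\alpha/2}(x)}$, so $\U_R=\{q(\widehat{B_{\alpha/2}(x)}):x\in M\}$ is an open cover, and if $\{f_R^n[x],f_R^n[y]\}\prec\U_R$ for all $n$ then $f^nx,f^ny\in B_{\alpha/2}(z_n)$, giving $d(f^nx,f^ny)<\alpha$ and $[x]=[y]$ immediately. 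This is exactly the paper's argument. Your worry about ``representatives not being $f^n$ of the original points'' is unfounded: $f^nx\in[f^nx]$ and the entire class lies in the saturating ball, so $f^nx$ itself is in it.

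For $(1)\Rightarrow(2)$, your eventual candidate metric $c\cdot d+\rho\circ(q\times q)$ is precisely the paper's construction (the paper takes $c=K=\alpha/(2\diam(M,d)+1)$ and checks the two required properties in three lines). The surrounding discussion of lower semicontinuity and uniform fiber separation is unnecessary, and the sentence ``distinct classes are $d$-separated by more than $\alpha$ uniformly'' is false as stated: points in different classes can be momentarily $d$-close. What you actually need, and what the construction gives directly, is that $d_1(f^nx,f^ny)\le\alpha$ for all $n$ forces $\rho([f^nx],[f^ny])\le\alpha\le\alpha_R$ for all $n$, hence $[x]=[y]$ by expansiveness of $f_R$, and then $d_1(x,y)=c\,d(x,y)<\nicefrac\alpha2$.
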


\begin{proof}
 \mbox{($1\Rightarrow2$)\;} Pick compatible metrics $d$ and $d_R$ for $M$ and $ M_R$ respectively. Since $f_R$ is expansive there exist an expansivity constant, say $\alpha_R>0$ for $f_R$. Fix any $\alpha>0$ such that $\alpha\leq\alpha_R$, define the constant
 $$
 K=\frac{\alpha}{2\diam(M,d)+1}
 $$
 and consider the new metric $d_1$ on $M$ given by the formula
 $$
 d_1(x,y)=d_R([x],[y])+Kd(x,y),\qquad\text{if }x,y\in M.
 $$
 This is a compatible metric for $M$ because $Kd\leq d_1$ and $q\colon(M,d)\to(M_R,d_R)$ is continuous. We claim that $f$ is $\alpha$-semi expansive with respect to $d_1$ and that the given equivalence relation $R$ coincides with $R(d_1,\alpha)$, this will prove that $f_R$ is a Lewowicz quotient according to Definition \ref{def:rel_quiv_de_Lew}.

 By the choice of $K$ if $x,y\in M$ and $x\sim y$ then $d_1(x,y)=Kd(x,y)<\nicefrac\alpha2$. This implies that $f$ is $\alpha$-semi expansive: if $x,y\in M$ and $d_1(f^nx,f^ny)\leq\alpha$ for all $n\in\Z$, then $d_R(f_R^n[x],f_R^n[y])=d_R([f^nx],[f^ny])\leq d_1(f^nx,f^ny)\leq\alpha\leq\alpha_R$ for all $n\in\Z$, hence, as $\alpha_R$ is an expansivity constant for $f_R$ we get $[x]=[y]$, then $d_1(x,y)<\nicefrac\alpha2$.

 To prove that $R=R(d_1,\alpha)$ denote $x\sim_1y$ to mean equivalence under $R(d_1,\alpha)$. Note that in the preceding paragraph we already showed that $x\sim_1 y$ implies $x\sim y$. Conversely, if $x\sim y$ we have $f^nx\sim f^ny$ for all $n\in\Z$ because $R$ is compatible with $f$. Then $d_1(f^nx,f^ny)=Kd(f^nx,f^ny)<\nicefrac\alpha2\leq\alpha$ for all $n\in\Z$, that is, $x\sim_1 y$.

 \mbox{($2\Rightarrow\!1$)\;} Assume now that $f_R$ is a Lewowicz quotient. Then by Definition \ref{def:rel_quiv_de_Lew} we know that there exist a metric $d$ on $M$ and $\alpha>0$ such that $f$ is $\alpha$-semi expansive and $R=R(d,\alpha)$. As by Lemma \ref{lem:cociente_metrizable} we have that $M_R$ is metrizable, in order to prove that $f_R$ is expansive it is enough to show an expansivity cover (Lemma \ref{lem:cubrimiento_exp}).

 For each $x\in M$ let $U_x=B_{\alpha/2}(x)$ and $\widehat U_x=\{y\in M:[y]\subseteq U_x\}$. By Remark \ref{obs:ugorro_abierto} we know that $\widehat U_x$ and $q(\widehat U_x)$ are open sets for all $x\in M$. Moreover, for each $x\in M$ we have $x\in\widehat U_x$ because by Remark \ref{obs:diam_clases} $\diam([x] )<\nicefrac\alpha2$ and hence $[x]\subseteq U_x$. Consequently $\U_R=\{q(\widehat U_x):x\in M\}$ is an open cover of $M_R$.

 We claim that $\U_R$ is an expansivity cover for $f_R$. Indeed, if  $x,y\in M$ and $\{f_R^n[x],f_R^n[y]\}\prec\U_R$ for all $n\in\Z$ then for each $n\in\Z$ we have $\{[f^nx],[f^ny]\}\subseteq q\bigl(\widehat U(z_n)\bigr)$ for suitable $z_n\in M$. As $\widehat U(z_n)$ is saturated we get $\{f^nx,f^ny\}\subseteq\widehat U(z_n)\subseteq B_{\alpha/2}(z_n)$, and then $d(f^nx,f^ny)<\alpha$ for all $n\in\Z$. Hence $[x]=[y]$ as desired.
\end{proof}

\begin{remark}\label{obs:expgordo}
 In the proof of \mbox{($1\Rightarrow2$)} in the previous proposition the semi expansivity constant $\alpha>0$ of $f$ can be chosen arbitrarily small compared with the expansivity constant $\alpha_R$ of $f_R$. In addition, taking a smaller constant $K$ we see that we can construct a metric on $M$ such that $f$ is $[\varepsilon,\alpha]$-expansive with $\varepsilon>0$ arbitrarily small relative to the chosen $\alpha>0$. Finally, note that the metrics $d_1$ constructed in this way verifies $d_R([x],[y])\leq d_1(x,y)$ for all $x,y\in M$. 
\end{remark}

As commented in the Introduction the first type of examples of semi expansive homeomorphisms are the perturbations of expansive homeomorphisms. Later, in \S4.2, the Lemma \ref{lem:expgordo_shift} will show another big source of examples of different kind, and to which we will apply our results. Next, to give some insight, we introduce two toy examples of semi expansive homeomorphisms.

\begin{example}\label{ex:semiexpansive1}
 Let $f\colon M\to M$ be an expansive homeomorphism with expansivity constant $\alpha$, consider the product space $M_1=M\times[0,1]$ and the homeomorphism $f_1\colon M_1\to M_1$ given by $f_1(x,s)=(fx,s)$ if $x\in M$, $s\in[0,1]$. Let $d_1$ be the metric on $M_1$ defined as\quad $d_1\bigl((x,s),(y,t)\bigr)=d(x,y)+\frac\alpha3|t-s|$,\quad if $x,y\in M$, $s,t\in[0,1]$.
 Then it is easily checked that $f_1$ is $\alpha$-semi expansive. In this case the equivalence classes of the associated equivalence relation are of the form $\{x\}\times[0,1]$, $x\in M$, the corresponding expansive quotient system is (conjugated to) $f$, and the canonical quotient map is (identified with) the projection on the first factor of $M\times[0,1]$.
\end{example}

\begin{example}\label{ex:semiexpansive2}
 For $i=1,2$ let $f_i\colon(M_i,d_i)\to(M_i,d_i)$ be an expansive homeomorphism with expansivity constant $1$. For simplicity assume also that $\diam M_1=\diam M_2=2$. Consider the product space $M=M_1\times M_2$ and the homeomorphism $f\colon M\to M$, $f(x_1,x_2)=(f_1x_1,f_2x_2)$ if $x_1\in M_1$, $x_2\in M_2$. Let $d'$ be the metric on $M$ given by \quad $d'\bigl((x_1,x_2),(y_1,y_2)\bigr)=2d_1(x_1,y_1)+\frac13d_2(x_2,y_2)$,\quad if $x_1,y_1\in M_1$, $x_2,y_2\in M_2$. Then, in a similar fashion as in Example \ref{ex:semiexpansive1} we have that the system $(M,d',f)$ is $2$-semi expansive and the associated expansive quotient system is (conjugated to) $f_1$. Analogously, one can introduce a metric $d''$ on $M$ such that $(M,d'',f)$ y also $2$-expansive but this time the associated expansive quotient is (conjugated to) $f_2$. Therefore, we see that even though the systems $(M,d',f)$ and $(M,d'',f)$ are conjugated and both $2$-semi expansive, they are semi expansive in a ``different way'', because the equivalence classes are of different kind in each case.
\end{example}

\subsection{\sc Topological characterization}\label{subsec:topolog_charact}

In Theorem \ref{teo:cociente_expnsivo} we given a characterization of those compact metrizable dynamical systems which are extension of an expansive system. In this subsection we consider this problem from a purely topological point of view, that is, we drop the assumption of metrizability of the extension. In Theorem \ref{teo:cocienteexpansivo3} we solve this problem giving necessary and sufficient conditions for an arbitrary compact system to be an extension of an expansive system.

\medskip

Only for this subsection we change our context convention and assume that $X$ is a compact topological space and $f\colon X\to X$ a homeomorphism.

\begin{definition}\label{def:rel_RC}
 For a cover $\C$ of $X$ let $R(\C)\subseteq X\times X$ be the relation on $X$, also denoted $\sim_\C$, given by the formula
 $$
 x\sim_\C y\quad\text{iff}\quad\{f^nx,f^ny\}\prec\C\text{ for all }n\in\Z,
 $$
 if $x,y\in X$, where as before the notation $A\prec\C$ means that $A\subseteq C$ for some $C\in\C$.  Given $x\in X$ we also define the sets
 $$
 [x]_\C=\{y\in X:x\sim_\C y\}\quad\text{and}\quad X_\C=\{[x]_\C:x\in X\}.
 $$
\end{definition}

Note that the relation $R(\C)$ of Definition \ref{def:rel_RC} is a symmetric and reflexive relation on $X$, but fails in general to be transitive. The relation $R(\C)$ will be transitive, and hence an equivalence relation, precisely when the collection $X_\C$ is a partition of $X$, in which case $X/{\sim_\C}=X_\C$.

\medskip

In the following Remark \ref{obs:conj_dirigido} and in Lemma \ref{lem:semicontinuidad} we will use the notions of \emph{directed set}, \emph{nets}, \emph{subnets} and \emph{convergence} as presented in  \cite{Kelley}*{Chapter 2}.

\begin{remark}\label{obs:conj_dirigido}
 If $(\Lambda,\leq)$ is a directed set and $\Lambda=\Lambda_1\cup\cdots\cup\Lambda_r$ then at least one of the sets $\Lambda_i$ verifies the following property: for every $\lambda\in\Lambda$ there exists $\mu\in\Lambda_i$ such that if $\nu\in\Lambda_i$ and $\mu\leq\nu$ then $\lambda\leq\nu$.

 Indeed, if the assertion is false then for all $i\in\{1,\ldots,r\}$ there exists $\lambda_i\in\Lambda$ such that for all $\mu\in\Lambda_i$ there exists $\nu\in\Lambda_i$ such that $\mu\leq\nu$ but $\lambda_i\not\leq\nu$. As $\Lambda$ is a directed set, there exists $\mu\in\Lambda$ such that $\lambda_i\leq\mu$ for all $i\in\{1,\ldots,r\}$. Let  $i_0\in\{1,\ldots,r\}$ be such that $\mu\in\Lambda_{i_0}$. Then, according to our assumption there exists $\nu\in\Lambda_{i_0}$ such that $\mu\leq\nu$ and $\lambda_{i_0}\not\leq\nu$, which is a contradiction because $\lambda_{i_0}\leq\mu$.

 As a consequence we see that if $(x_\lambda)_{\lambda\in\Lambda}$ is a net in $X$ then $(x_\lambda)_{\lambda\in\Lambda_i}$ is a subnet for some $i\in\{1,\ldots,r\}$.
\end{remark}

Let $2^X$ be the collection of all non-empty subsets of $X$. A map $F\colon X\to2^X$ is said to be \emph{upper semi continuous} iff for all $x\in X$ and all neighborhoods $U$ of $F(x)$ there exists a neighborhood $V$ of $x$ such that $F(y)\subseteq U$ if $y\in V$ (see \cite{Aub}*{Definition 1.4.1}). It is easy to check that a decomposition $\D$ of $X$ is upper semi continuous (see Definition \ref{def:usd}) iff the map $X\to2^X$ that associates to each $x\in X$ the unique element $D\in\D$ such that $x\in D$ is upper semi continuous.

\begin{lemma}\label{lem:semicontinuidad}
 If $\C$ is a finite closed cover of $X$ then the map $X\to2^X$, $x\mapsto[x]_\C$, is upper semi continuous. If in addition $R(\C)$ is an equivalence relation then $X_\C$ is an upper semi continuous decomposition.  
\end{lemma}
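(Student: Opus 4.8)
The plan is to prove the two assertions in turn, using the net-compactness of $X$ together with the covering-number combinatorial fact recorded in Remark~\ref{obs:conj_dirigido}. For the first assertion, fix $x\in X$ and a neighborhood $U$ of $[x]_\C$; we must produce a neighborhood $V$ of $x$ such that $[y]_\C\subseteq U$ whenever $y\in V$. Suppose not. Then for every neighborhood $V$ of $x$ there is a point $x_V\in V$ and a point $y_V\in[x_V]_\C$ with $y_V\notin U$. Indexing by the neighborhood filter of $x$ (directed by reverse inclusion), we obtain nets $(x_V)$ and $(y_V)$ with $x_V\to x$; by compactness of $X$ we pass to a subnet along which $y_V\to y$ for some $y\in X$, and since each $y_V$ lies in the closed set $X\setminus U$ we get $y\notin U$, so in particular $y\notin[x]_\C$. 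The goal is to derive a contradiction by showing $y\in[x]_\C$, i.e.\ $\{f^nx,f^ny\}\prec\C$ for all $n\in\Z$.

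To see this, fix $n\in\Z$. For each index $V$ in the subnet we have $x_V\sim_\C y_V$, hence $\{f^nx_V,f^ny_V\}\subseteq C$ for some $C\in\C$; since $\C$ is \emph{finite}, by Remark~\ref{obs:conj_dirigido} (applied to the partition of the index set according to which member $C\in\C$ is chosen) there is a fixed $C_n\in\C$ and a subnet along which $\{f^nx_V,f^ny_V\}\subseteq C_n$ for all indices. Along that subnet $f^nx_V\to f^nx$ and $f^ny_V\to f^ny$ by continuity of $f^n$, and since $C_n$ is \emph{closed} we conclude $\{f^nx,f^ny\}\subseteq C_n$, i.e.\ $\{f^nx,f^ny\}\prec\C$. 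Here one must be a little careful: the subnet extracted depends on $n$. The clean way around this is to do the extraction for all $n$ simultaneously at the start by instead arguing by contradiction on the \emph{statement} ``$y\notin[x]_\C$'': that gives a single $n_0$ with $\{f^{n_0}x,f^{n_0}y\}\not\prec\C$, and then we only need one extraction, for that one value $n_0$, to reach the contradiction. So fix such an $n_0$; extract the subnet on which $\{f^{n_0}x_V,f^{n_0}y_V\}\subseteq C_{n_0}$ for a fixed $C_{n_0}\in\C$; pass to the limit to get $\{f^{n_0}x,f^{n_0}y\}\subseteq C_{n_0}$, contradicting the choice of $n_0$. This establishes upper semi continuity of $x\mapsto[x]_\C$.

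For the second assertion, assume $R(\C)$ is an equivalence relation, so that $X_\C$ is a decomposition of $X$ (a partition into the classes $[x]_\C$). By the remark preceding the lemma (that a decomposition is upper semi continuous iff the associated point-to-class map is upper semi continuous), and since that associated map is exactly $x\mapsto[x]_\C$ when $R(\C)$ is an equivalence relation, the first part immediately gives that $X_\C$ is an upper semi continuous decomposition. The only point to check is that the "neighborhood of $D$" formulation in Definition~\ref{def:usd} matches the "neighborhood of $F(x)$ at each point $x$" formulation; this is the content of the sentence quoted from \cite{Aub}, so nothing further is needed.

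The main obstacle is the one flagged above: the subnet used to stabilize the choice of member of $\C$ depends on the index $n$, so one cannot naively get a single subnet working for all $n\in\Z$ at once (that would require a countable diagonal argument, which nets do not support in general). The fix is structural rather than technical — negate the statement "$y\in[x]_\C$" \emph{before} doing any extraction, so that only one value of $n$ is ever in play — and it is essential that $\C$ be finite (so Remark~\ref{obs:conj_dirigido} applies) and closed (so limits stay inside the chosen member). Everything else is routine net-chasing.
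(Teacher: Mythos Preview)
Your proof is correct and follows essentially the same route as the paper's: nets indexed by the neighborhood filter of $x$, compactness to extract a limit $y\notin U$, and Remark~\ref{obs:conj_dirigido} applied at each $n$ to stabilize the member of $\C$ before passing to the limit in the closed set. Your worry about the $n$-dependence of the extracted subnet is unfounded, and the paper does not share it: for each fixed $n$ the conclusion $\{f^nx,f^ny\}\prec\C$ is a statement about the already-fixed limit points $x$ and $y$, so using a fresh subnet for each $n$ is harmless---your detour through a single $n_0$ works too, but is not needed.
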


\begin{proof}
 To prove the first assertion suppose on the contrary that the map $X\to2^X$, $x\mapsto[x]_\C$, is not upper semi continuous. Then there exist $x\in X$ and a neighborhood $U$ of $[x]_\C$ such that for every neighborhood $V$ of $x$ there exists $x'\in V$ such that $[x']_\C\not\subseteq U$. Picking a point $x'$ as before in each neighborhood $V$ of $x$ one can construct a net $(x_\lambda)_{\lambda\in\Lambda}$ converging to $x$ such that $[x_\lambda]_\C\not\subseteq U$ for all $\lambda\in\Lambda$. Then there exists a net $(y_\lambda)_{\lambda\in\Lambda}$ such that $x_\lambda\sim_\C y_\lambda$ and $y_\lambda\notin U$ for all  $\lambda\in\Lambda$. As $X$ is compact and $U$ is open taking a subnet if necessary we can assume that $y_\lambda\to y\notin U$.

 For each fixed $n\in\Z$ we have that for all $\lambda\in\Lambda$ there exists $C\in\C$ such that $\{f^nx_\lambda,f^ny_\lambda\}\subseteq C$ because $x_\lambda\sim_\C y_\lambda$. Hence $\Lambda$ can be written as a finite union $\Lambda=\bigcup_{C\in\C}\Lambda_C$ where $\Lambda_C=\bigl\{\lambda\in\Lambda:\{f^nx_\lambda,f^ny_\lambda\}\subseteq C\bigr\}$. Then, by Remark \ref{obs:conj_dirigido} there exists $C\in\C$ such that $(x_\lambda)_{\lambda\in\Lambda_C}$ and $(y_\lambda)_{\lambda\in\Lambda_C}$ are subnets of $(x_\lambda)_{\lambda\in\Lambda}$ and $(y_\lambda)_{\lambda\in\Lambda}$, respectively. For this $C$ we have $\{f^nx_\lambda,f^ny_\lambda\}\subseteq C$ for all $\lambda\in\Lambda_C$, then as $C$ is closed taking limits we obtain $\{f^nx,f^ny\}\subseteq C$, that is $\{f^nx,f^ny\}\prec\C$. As this is true for all $n\in\Z$ we conclude that $x\sim_\C y$, and therefore $y\in[x]_\C\subseteq U$, which is a contradiction because we previously had $y\notin U$.

 The last assertion follows from the comment made before this Lemma.
\end{proof}

In the next Definition \ref{def:Usemiexp} and Proposition \ref{teo:cocienteexpansivo3} for a collection $\C$ of subsets of $X$ we use the notation $\xoverline\C=\{\xoverline C:C\in\C\}$, where $\xoverline C$ stands for the closure of $C$, and 
\begin{center}
 $\C^k=\{C_1\cup\cdots\cup C_k:C_1,\ldots,C_k\in\C,\,C_i\cap C_{i+1}\neq\varnothing\text{ for }1\leq i<k\}$,
\end{center}
if $k\in\N$. Note that if $\C$ is a closed cover then $\C^k$ is a closed cover too.

\begin{definition}\label{def:Usemiexp}
 For a finite open cover $\U$ of $X$ we say that $f$ is \emph{$\U$-semi expansive} iff $x,y\in X$ and $x\sim_{\text{$\xoverline\U$}{}^{^4}}y$ impies $x\sim_\U y$, that is, $R(\xoverline\U^4)\subseteq R(\U)$. 
\end{definition}

The above definition will play the role of Definition \ref{def:semi_expansivo} in the purely topological setting, although it resembles the definition of $[\nicefrac\alpha4,\alpha]$-expansiveness in the metrizable case instead of $[\nicefrac\alpha2,\alpha]$-expansiveness.

\begin{remark}\label{obs:R(U)equiv}
 If $f$ is $\U$-semi expansive, as we always have $R(\U)\subseteq R(\xoverline\U^4)$ because $\U\prec\xoverline\U^4$, we see that $R(\U)=R(\xoverline\U^4)$. Moreover, this is an equivalence relation as the following verification of the transitive law shows: $R(\U)\circ R(\U)\subseteq R(\U^2)\subseteq R(\xoverline\U^2)\subseteq R(\xoverline\U^4)=R(\U)$, where $\circ$ denotes the usual composition of relations. Note also that $R(\U)$ is compatible with $f$.
\end{remark}

\begin{remark}\label{obs:generador}
 For later use let us recall some facts from \cite{KR}. A \emph{generator} for $f$ is a finite open cover $\U$ of $X$ such that $\bigcap_{n\in\Z}f^n\xoverline U_n$ contains at most one point for every bi-sequence $(U_n)_{n\in\Z}$ of members of $\U$ (see \cite{KR}*{Definition 2.4}). Note that this is equivalent to: if $x,y\in X$ and $\{f^nx,f^ny\}\prec\xoverline\U$ for all $n\in\Z$ then $x=y$. In \cite{KR}*{Theorem 3.2} it is proved that $f$ has a generator iff $f$ is expansive.
\end{remark}

Next we introduce the main result of this subsection, a topological characterization of general extensions of expansive systems in the compact case.

\begin{theorem}\label{teo:cocienteexpansivo3}
 Let $f\colon X\to X$ be a homeomorphism of a compact topological space $X$, $R$ an equivalence relation on $X$ compatible with $f$, $X_R=X/R$ the quotient space and $f_R\colon X_R\to X_R$ the induced homeomorphism. Then the following conditions are equivalent.
 \begin{enumerate}
  \item $f_R$ is expansive.
  \item $f$ is $\U$-semi expansive and $R=R(\U)$ for some finite open cover $\U$ of $X$.
\end{enumerate}
\end{theorem}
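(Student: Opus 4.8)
The plan is to mimic the structure of the proof of Theorem \ref{teo:cociente_expnsivo}, replacing metric balls by members of open covers and the metrizability lemma by the upper semi continuity result of Lemma \ref{lem:semicontinuidad}. For the implication ($2\Rightarrow1$), assume $f$ is $\U$-semi expansive and $R=R(\U)$. By Remark \ref{obs:R(U)equiv} we know $R(\U)=R(\xoverline\U^4)$ is indeed an equivalence relation compatible with $f$, and $X_\U=X/R(\U)$. Since $\xoverline\U^4$ is a finite closed cover and $R(\xoverline\U^4)=R(\U)$ is an equivalence relation, Lemma \ref{lem:semicontinuidad} gives that $X_R$ is an upper semi continuous decomposition; hence $q\colon X\to X_R$ is closed and (as $X$ is compact) $X_R$ is compact Hausdorff, so by Lemma \ref{lem:cubrimiento_exp} it suffices to exhibit an expansivity cover for $f_R$. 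Following the proof of Theorem \ref{teo:cociente_expnsivo}, for each $U\in\U$ set $\widehat U=\{x\in X:[x]_\U\subseteq U\}$; by the analogue of Remark \ref{obs:ugorro_abierto} (which only needs $q$ closed) the sets $\widehat U$ and $q(\widehat U)$ are open. Upper semi continuity of $x\mapsto[x]_\U$ ensures that every class $[x]_\U$ is contained in some $\widehat U$, so $\U_R=\{q(\widehat U):U\in\U\}$ is an open cover of $X_R$. If $\{f_R^n[x],f_R^n[y]\}\prec\U_R$ for all $n$, then for each $n$ there is $U_n\in\U$ with $\{[f^nx],[f^ny]\}\subseteq q(\widehat{U_n})$; since $\widehat{U_n}$ is saturated, $\{f^nx,f^ny\}\subseteq\widehat{U_n}\subseteq U_n$, i.e. $\{f^nx,f^ny\}\prec\U$ for all $n$, whence $x\sim_\U y$ and $[x]=[y]$. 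So $\U_R$ is an expansivity cover and $f_R$ is expansive.

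For ($1\Rightarrow2$), suppose $f_R$ is expansive. By Lemma \ref{lem:cubrimiento_exp} pick an expansivity cover $\U_R=\{W_1,\dots,W_r\}$ for $f_R$; refining if necessary we may also arrange (using compactness and Remark \ref{obs:generador} applied to $f_R$, or just normality of $X_R$) that $\U_R$ is a generator for $f_R$, i.e. even $\{f_R^n\bar a,f_R^n\bar b\}\prec\xoverline{\U_R}$ for all $n$ forces $\bar a=\bar b$. The idea is then to pull $\U_R$ back: each $q^{-1}(W_i)$ is open and saturated in $X$, but $\{q^{-1}(W_i)\}$ is generally not fine enough to distinguish points within a class, so we intersect with a finite open cover $\V$ of $X$ chosen fine enough that $\xoverline\V^4$ refines the open sets one gets from the class structure. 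Concretely, I would take $\U$ to be a common refinement of $\{q^{-1}(W_i)\}_i$ and a cover $\V$ with the property that $\xoverline\V^4\prec\{q^{-1}(W_i)\}$ — such $\V$ exists by a Lebesgue-number-type argument for finite covers of the compact Hausdorff (hence normal) space $X$, applied to the cover $\{q^{-1}(W_i)\}$ after shrinking. Then I must check the two requirements: that $R(\U)=R$, and that $f$ is $\U$-semi expansive, i.e. $R(\xoverline\U^4)\subseteq R(\U)$.

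For $R(\U)=R$: if $x\sim_\U y$ then in particular for each $n$ there is $U\in\U$ with $\{f^nx,f^ny\}\subseteq U\subseteq q^{-1}(W_{i(n)})$, so $\{f_R^n[x],f_R^n[y]\}\subseteq W_{i(n)}$, i.e. $[x]\sim_{\U_R}[y]$ for the expansivity cover $\U_R$, forcing $[x]=[y]$, that is $x\,R\,y$. Conversely if $x\,R\,y$ then $f^nx$ and $f^ny$ lie in the same class for every $n$; here I need that members of $\U$ are large enough to contain whole classes — this is where the fineness of $\V$ combined with upper semi continuity of the $R$-decomposition (Lemma \ref{lem:semicontinuidad}, applicable since $R$ is an equivalence relation and $\xoverline\U^4$ will turn out to be a finite closed cover) is used: each class $[z]$ is contained in some $U\in\U$, so $\{f^nx,f^ny\}\subseteq[f^nx]\subseteq$ some member of $\U$, giving $x\sim_\U y$. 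For $\U$-semi expansiveness: if $x\sim_{\xoverline\U^4}y$ then for each $n$, $\{f^nx,f^ny\}$ lies in a member of $\xoverline\U^4$, which by construction of $\V$ is contained in some $q^{-1}(W_{i(n)})$; so $\{f_R^n[x],f_R^n[y]\}\subseteq W_{i(n)}$, hence $\{f_R^n[x],f_R^n[y]\}\prec\xoverline{\U_R}$ for all $n$, and the generator property of $\U_R$ forces $[x]=[y]=[f^0x]$; then $x$ and $y$ are in the same class, and as above that class lies in a single member of $\U$, so $x\sim_\U y$. This completes ($1\Rightarrow2$).

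The main obstacle I anticipate is the combinatorial bookkeeping in ($1\Rightarrow2$): one must choose a single finite open cover $\U$ that simultaneously (a) is coarse enough that $R(\U)$ does not separate $R$-equivalent points — which requires each $R$-class to sit inside a member of $\U$, hence needs the upper semi continuity of the $R$-decomposition and a careful Lebesgue-number argument in the non-metrizable compact Hausdorff (normal) setting — and (b) is fine enough that even $\xoverline\U^4$-closeness of all iterates collapses to equality in the quotient, which is why one passes to a generator for $f_R$ rather than a mere expansivity cover, and why the definition uses $\xoverline\U^4$ (the extra powers absorbing the closures and the composition steps exactly as in Remark \ref{obs:R(U)equiv}). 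Verifying that the resulting $\xoverline\U^4$ is a finite closed cover so that Lemma \ref{lem:semicontinuidad} applies, and that all these refinements can be met at once, is the delicate point; everything else is a direct transcription of the metric argument in Theorem \ref{teo:cociente_expnsivo}.
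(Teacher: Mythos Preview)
There are genuine gaps in both directions.

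In $(2\Rightarrow1)$ your candidate $\U_R=\{q(\widehat U):U\in\U\}$ need not cover $X_R$. Upper semi continuity does not say that a class $[x]_\U$ sits inside a single $U\in\U$; from $y\in[x]_\U$ you only get $\{x,y\}\subseteq U$ for some $U$ \emph{depending on $y$}, so $[x]_\U\subseteq U(x):=\bigcup\{U\in\U:x\in U\}$, but in general $[x]_\U\not\subseteq U$ for any one $U$. (Concretely: $X=\{a,b,c\}$ discrete, $f=\mathrm{id}$, $\U=\bigl\{\{a,b\},\{b,c\},\{a,c\}\bigr\}$ is $\U$-semi expansive with $[a]_\U=X$, yet every $\widehat U$ is empty.) The paper therefore uses $U(x)$ and sets $\U_R=\{q(\widehat{U(x)}):x\in X\}$. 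This \emph{is} a cover, but now showing it is a \emph{generator} (which is what is needed---Lemma \ref{lem:cubrimiento_exp} is stated in the compact metric context and you have not yet shown $X_R$ metrizable) costs extra links: from $[f^nx]\in q(\widehat{U(z_n)})^-$ one only recovers $f^nx\sim x_n$ for some $x_n\in U(z_n)^-$, giving one $\U$-link and one $\xoverline\U$-link, and similarly for $y$; altogether this is an $\xoverline\U^4$-chain between $f^nx$ and $f^ny$, and \emph{that} is precisely why Definition \ref{def:Usemiexp} involves the fourth power.

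In $(1\Rightarrow2)$ your construction has an unresolvable tension: you take $\U$ to be a common \emph{refinement} of $\{q^{-1}(W_i)\}$ and a fine cover $\V$, so members of $\U$ are small, yet to obtain $R\subseteq R(\U)$ you need each $R$-class to lie inside a single member of $\U$. Your appeal to Lemma \ref{lem:semicontinuidad} at this point is circular, since that lemma requires $R=R(\C)$ for some finite closed cover $\C$, which is exactly what you are trying to construct. The paper bypasses all of this by using that $X_R$, carrying an expansive homeomorphism, is metrizable: pick a metric $d_R$ with expansivity constant $\alpha$, take a finite open cover $\U_R$ of $X_R$ by sets of diameter $<\nicefrac\alpha4$, and simply set $\U=q^{-1}(\U_R)$. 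Every member of $\U$ is then $R$-saturated, so $R\subseteq R(\U)$ is automatic, and any $\xoverline\U^4$-chain projects under $q$ to a set of $d_R$-diameter at most $\alpha$, forcing $[x]=[y]$ and hence $x\sim_\U y$.
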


\begin{proof}
 \mbox{$1)\Rightarrow2).\,$} Let $d$ be a compatible metric for $X_R$ and $\alpha>0$ an expansivity constant for $f_R$ relative to $d$. Consider a finite open cover $\U_R$ of $X$ whose members have diameter less than $\nicefrac\alpha4$ and define $\U=q^{-1}(\U_R)$, where $q\colon X\to X_R$, $q(x)=[x]$ if $x\in X$, is the canonical projection.

 We claim that $f$ is $\U$-semi expansive. Indeed, if $x,y\in X$ and $x\sim_{\text{$\xoverline\U$}{}^{^4}}y$, then for each $n\in\Z$ there exist $U_1,U_2,U_3,U_4\in\U$ such that $f^nx\in\xoverline U_{\hspace{-.4mm}1}$, $\xoverline U_{\hspace{-.4mm}i}\cap\xoverline U_{\hspace{-.4mm}i+1}\neq\varnothing$ for $1\leq i\leq3$ and $f^ny\in\xoverline U_{\hspace{-.4mm}4}$. Hence $f_R^n[x]=[f^nx]\in q(\xoverline U_{\hspace{-.4mm}1})$, $q(\xoverline U_{\hspace{-.4mm}i})\cap q(\xoverline U_{\hspace{-.4mm}i+1})\neq\varnothing$ for $1\leq i\leq3$ and $f_R^n[y]=[f^ny]\in q(\xoverline U_{\hspace{-.4mm}4})$. As $q(\xoverline U_{\hspace{-.4mm}i})\subseteq \xoverline{q(U_i)}$ and $q(U_i)\in\U_R$ has diameter less than $\nicefrac\alpha4$ for $1\leq i\leq4$ we deduce that $d(f_R^n[x],f_R^n[y])\leq\alpha$ for all $n\in\Z$. Therefore, as $f_R$ is $\alpha$-expansive we obtain $[x]=[y]$, and then $x\sim_\U y$ because the members of $\U$ are $R$-saturated.

 Finally note that $R=R(\U)$ because if $x,y\in X$ and $x\sim_\U y$ then   clearly $x\sim_{\text{$\xoverline\U$}{}^{^4}}y$, hence as we already showed in the previous paragraph $[x]=[y]$, that is, $x\sim y$, where ``$\sim$'' means equivalence under $R$. Conversely if $x\sim y$ then $x\sim_\U y$ because the members of $\U$ are $R$-saturated.

 \mbox{$2)\Rightarrow1).\,$} As $f$ is $\U$-semi expansive we know that  $R(\xoverline\U^4)=R(\U)=R$ (see Remark \ref{obs:R(U)equiv}). Then, as $\xoverline\U^4$ is a finite closed cover of $X$, by Lemma \ref{lem:semicontinuidad} $X_R$ is an upper semi continuous decomposition of $X$. Applying \cite{Nadler}*{Proposition 3.7} we obtain that the canonical map $q\colon X\to X_R$ is a closed map.

 To prove that $f_R$ is expansive we will show a generator $\U_R$. To this end for each $x\in X$ consider $U(x)=\bigcup\{U\in\U:x\in U\}$ and $\widehat U(x)=\{y\in X:[y]\subseteq U(x)\}$, and define the finite collection $\U_R=\bigl\{q\bigl(\widehat U(x)\bigr):x\in X\bigr\}$. Note that $x\in\widehat U(x)$ for all $x\in X$, because if $y\in[x]=[x]_\U$ then $\{x,y\}\prec\U$ and hence $y\in U(x)$, that is, $[x]\subseteq U(x)$. Hence we deduce that $\U_R$ is a cover of $X_R$. Moreover, by Remark \ref{obs:ugorro_abierto} we conclude that $\U_R$ is an open cover.

 We claim that $\U_R$ is a generator for $f_R$. By Remark \ref{obs:generador} it is enough to show that if $x,y\in X$ and $[x]\sim_{\text{$\xoverline{\U_R}$}}[y]$ then $[x]=[y]$. Suppose that $x,y\in X$ are points such that $\{f_R^n[x],f_R^n[x]\}\prec\xoverline{\U_R}$ for all $n\in\Z$. For each $n\in\Z$ let $z_n\in X$ such that $\{[f^nx],[f^ny]\}\subseteq q\bigl(\widehat U(z_n)\bigr)^-$. Note that $q\bigl(\widehat U(z_n)\bigr)^-\subseteq q\bigl(U(z_n)\bigr)^-=q\bigl(U(z_n)^-\bigr)$, where in the last equality we used that $q$ is a closed map as we already showed before. Then we can take $x_n,y_n\in U(z_n)^-$ such that $f^nx\sim x_n$ and $f^ny\sim y_n$ for all $n\in\Z$. As $\U$ is finite, for all $z\in X$ we have $U(z)^-=\bigl(\bigcup\{U\in\U:x\in U\}\bigr)^-=\bigcup\{\xoverline U:x\in U\in\U\}$, then $\{x_n,z_n\}\prec\xoverline\U$ and $\{y_n,z_n\}\prec\xoverline\U$ because $x_n,y_n\in U(z_n)^-$. In addition we have  $\{f^nx,x_n\}\prec\U$ and $\{f^ny,y_n\}\prec\U$ because $f^nx\sim x_n$, $f^ny\sim y_n$ and $R=R(\U)$. Then we conclude that $\{f^nx,f^ny\}\prec\xoverline\U^4$ for all $n\in\Z$. Therefore, $x\sim_{\text{$\xoverline\U$}{}^{^4}}y$, and hence $[x]=[y]$ because $R=R(\xoverline\U^4)$.
\end{proof}

\section{Anosov quotients}\label{sec:anosov_quotients}

In this section we give a characterization of the metrizable extensions of Anosov systems (see Definition \ref{def:shadowing}) in the compact case in Theorem \ref{teo:cociente_anosov}. To do that we introduce in Definition \ref{def:semi_anosov} the class of \emph{semi Anosov homeomorphisms}. Later, in \S\ref{subsec:estabilidad_gorda} and \S\ref{subsec:envovente} we show two applications of this theorem (see Theorems \ref{teo:estabilidad} and \ref{teo:envolvente}). As usual $(M,d)$ will denote a compact metric space and $f\colon M\to M$ a homeomorphism.

\begin{definition}\label{def:shadowing}
 Let $\xi=(x_n)_{n\in\Z}$ a bi--sequence in $M$. Given $\delta>0$, $\xi$ is called \emph{$\delta$-pseudo orbit} iff $d(fx_n,x_{n+1})<\delta$ for all $n\in\Z$. For $\varepsilon>0$ we say that $x\in M$ \emph{$\varepsilon$-shadows} $\xi$ iff $d(f^nx,x_n)<\varepsilon$ for all $n\in\Z$.

 Let $\Set\subseteq M^\Z$ be a set of bi--sequences in $M$. Given $\delta>0$ and $\varepsilon>0$ we say that $f$ has the \emph{$\varepsilon-\delta$ shadowing property on $\Set$} iff every $\delta$-pseudo orbit belonging to $\Set$ can be $\varepsilon$-shadowed. We say that $f$ has the \emph{shadowing property on $\Set$} iff for every $\varepsilon>0$ there exists $\delta>0$ such that $f$ has the $\varepsilon-\delta$ shadowing property on $\Set$. In the previous cases if $\Set=M^\Z$ we use the same terminology omitting the reference to $\Set$. If $f$ is expansive and has the shadowing property then $f$ is called \emph{Anosov homeomorphism}.
\end{definition}

We extend some of the above definitions to the topological setting as follows.

\begin{definition} 
 Let $f\colon M\to M$ be a homeomorphism of a topological space $M$ and $\xi=(x_n)_{n\in\Z}$ a bi--infinite sequence in $M$. Given an open cover $\V$ of $M$ we say that $\xi$ is a \emph{$\V$-pseudo orbit} iff $\{fx_n,x_{n+1}\}\prec\V$ for every $n\in\Z$, where for $A\subseteq M$ the notation  $A\prec\V$ means that $A\subseteq V$ for some $V\in\V$. Given an open cover $\U$ of $M$ we say that $x\in M$ \emph{$\U$-shadows} $\xi$ iff $\{f^nx,x_n\}\prec\U$ for all $n\in\Z$.
\end{definition}

In the next auxiliary result we will use the notion of generator. See Remark \ref{obs:generador}.

\begin{lemma}\label{lem:shadowing_top} 
 For $\Set\subseteq M^\Z$ the following conditions are equivalent.
 \begin{enumerate}
  \item $f$ is expansive and has the shadowing property on $\Set$.
  \item There exist a generator $\U$ for $f$ and an open cover $\V$ of $M$ such that every $\V$-pseudo orbit belonging to $\Set$ can be $\U$-shadowed.
 \end{enumerate}
\end{lemma}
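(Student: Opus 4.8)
The plan is to prove both implications by translating between the metric notions (expansivity constant, $\varepsilon$-$\delta$ shadowing) and the covering notions (generator, $\V$-pseudo orbit, $\U$-shadowing), using in both directions the standard Lebesgue-number / uniform-refinement correspondence available since $M$ is compact metrizable. Fix a compatible metric $d$ on $M$.

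For the implication $(1)\Rightarrow(2)$, assume $f$ is expansive with expansivity constant $\alpha>0$ and has the shadowing property. First I would produce the generator: by Remark \ref{obs:generador} (or Lemma \ref{lem:cubrimiento_exp} together with \cite{KR}*{Theorem 3.2}) expansiveness gives a generator $\U$ for $f$; concretely one may take $\U$ to be a finite open cover all of whose members have diameter less than $\alpha$, since then $\{f^nx,f^ny\}\prec\xoverline\U$ for all $n$ forces $d(f^nx,f^ny)\le\alpha$ for all $n$, hence $x=y$. Let $\varepsilon>0$ be a Lebesgue number for $\U$, so that any set of diameter less than $\varepsilon$ is contained in some member of $\U$. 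By the shadowing property choose $\delta>0$ such that $f$ has the $\varepsilon$-$\delta$ shadowing property, and let $\V$ be a finite open cover of $M$ all of whose members have diameter less than $\delta$. Then any $\V$-pseudo orbit $\xi=(x_n)$ satisfies $d(fx_n,x_{n+1})<\delta$ for all $n$, so it is a $\delta$-pseudo orbit and is $\varepsilon$-shadowed by some $x\in M$; since $d(f^nx,x_n)<\varepsilon$ for all $n$, by the Lebesgue number property $\{f^nx,x_n\}\prec\U$ for all $n$, i.e.\ $x$ $\U$-shadows $\xi$. This works for every $\V$-pseudo orbit in $M^\Z$, in particular for those in $\Set$.

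For the implication $(2)\Rightarrow(1)$, assume the generator $\U$ and the open cover $\V$ are given. That $f$ is expansive is immediate from the existence of the generator $\U$ (Remark \ref{obs:generador}). To get the metric shadowing property on $\Set$, fix $\varepsilon>0$; I must find $\delta>0$ so that every $\delta$-pseudo orbit in $\Set$ is $\varepsilon$-shadowed. The obstacle here is that a single generator $\U$ only controls shadowing at the fixed ``scale'' of $\U$, whereas $\varepsilon$ may be much smaller than the members of $\U$; so I need to sharpen the $\U$-shadowing to genuine metric shadowing. This is where I would invoke uniform expansiveness: since $f$ is expansive, by a Bryant-type argument (Proposition \ref{prop:unif_alplha_epsilon_exp}, or directly \cite{Br}) there is $N\in\N$ such that if $\{f^nx,f^ny\}\prec\U$ for all $|n|\le N$ then $d(x,y)<\varepsilon$. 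Now let $\V'$ be a finite open cover refining $\V$ whose members have diameter small enough that $\{f^nV':V'\in\V',\ |n|\le N\}$ still refines $\V$ (possible by uniform continuity of the finitely many maps $f^n$, $|n|\le N$), and let $\delta>0$ be a Lebesgue number for $\V'$. Given a $\delta$-pseudo orbit $\xi=(x_n)\in\Set$, it is a $\V'$-pseudo orbit, hence a $\V$-pseudo orbit, so by hypothesis some $x\in M$ $\U$-shadows $\xi$: $\{f^nx,x_n\}\prec\U$ for all $n$. Applying this to the shifted pseudo orbits $\sigma^k\xi$ (note a shift of a $\delta$-pseudo orbit is again one, and one checks $\Set$-membership is only needed for $\xi$ itself if one instead argues via expansiveness comparing two shadowing points) I obtain, for each $k$, a point $x^{(k)}$ $\U$-shadowing $\sigma^k\xi$; then $\{f^n x^{(0)}, f^{n-k} x^{(k)}\}$ — hmm, this requires care, so more cleanly: shadowing of $\sigma^k\xi$ by $x^{(k)}$ means $\{f^{n}x^{(k)}, x_{n+k}\}\prec\U$, i.e.\ $\{f^{n+k}( f^{-k}x^{(k)}), x_{n+k}\}\prec\U$, so $y_k:=f^{-k}x^{(k)}$ satisfies $\{f^m y_k, x_m\}\prec\U$ for all $m$, and by the generator property all the $y_k$ coincide with a single point $y$ that $\U$-shadows $\xi$ ``with every center''. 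Then for each $m$, applying uniform expansiveness to $x':=f^m y$ and to the point $f^m y_m=x^{(m)}$... I will instead conclude directly: one shows that the unique $\U$-shadowing point $y$ in fact satisfies $d(f^m y, x_m)<\varepsilon$ for all $m$ by applying the uniform-expansiveness estimate to the pair $f^{m}y$ and any point $w$ with $\{f^n(f^{-m}w),x_{n+m}\}\prec\U$ for $|n|\le N$, which exists by $\U$-shadowing $\sigma^m\xi$ (a $\V$-pseudo orbit, automatically) and agrees near time $0$. Assembling these estimates gives that $y$ $\varepsilon$-shadows $\xi$, completing the proof.

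The main obstacle, as indicated, is the second implication: passing from the fixed-scale $\U$-shadowing guaranteed by the hypothesis to $\varepsilon$-shadowing for arbitrarily small $\varepsilon$. The right tool is uniform expansiveness (Proposition \ref{prop:unif_alplha_epsilon_exp}), combined with the fact that shifts of $\V$-pseudo orbits are $\V$-pseudo orbits so that one can shadow every time-window and then use the generator to glue these local shadowing points into one; the bookkeeping with the index shifts $\sigma^k$ and the saturation of which sequences lie in $\Set$ (only $\xi$ itself is used as a hypothesis, the shifts being used only to locate the uniformly-close witness points, which lie in $M^\Z$) is the routine part I would spell out carefully.
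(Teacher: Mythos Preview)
Your $(1\Rightarrow2)$ is essentially the paper's argument and is correct (modulo the harmless slip that the shadowing hypothesis is only on $\Set$, not on all of $M^\Z$; this is what $(2)$ requires anyway).

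Your $(2\Rightarrow1)$, however, has a genuine gap. The uniform-generator estimate you invoke (``if $\{f^n a,f^n b\}\prec\U$ for all $|n|\le N$ then $d(a,b)<\varepsilon$'') compares two genuine $f$-orbits. But the pair you must control is $a=f^m y$ (an orbit point) and $b=x_m$ (a pseudo-orbit point). From $\U$-shadowing you know $\{f^{m+n}y,\,x_{m+n}\}\prec\U$; to feed this into the uniform estimate you would need $\{f^n(f^m y),\,f^n x_m\}\prec\U$, i.e.\ to replace $x_{m+n}$ by the orbit segment $f^n x_m$. For small $\delta$ these two points are close, but closeness of $f^n x_m$ to $x_{m+n}$ does \emph{not} place $f^n x_m$ in the same member of $\U$ (or of $\overline\U$) that already contains $f^{m+n}y$ and $x_{m+n}$; at best one lands in $\U^2$, which need not be a generator, so no uniform estimate of the required form is available. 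Your detour through ``$\U$-shadowing $\sigma^m\xi$'' does not help either: since $\sigma^m\xi$ need not lie in $\Set$, the only $\U$-shadowing point you can legitimately produce for it is $f^m y$ itself, which yields no new information. The cover $\V'$ you introduce plays no role in closing this gap.

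The paper sidesteps this difficulty with a short compactness/contradiction argument: assuming some $\varepsilon>0$ admits, for each $k$, a $\nicefrac1k$-pseudo orbit $\xi_k\in\Set$ that is not $\varepsilon$-shadowed, one notes that for large $k$ these are $\V$-pseudo orbits, hence $\U$-shadowed by some $z_k$; choosing $n_k$ with $d(f^{n_k}z_k,x^k_{n_k})\ge\varepsilon$, shifting so that $n_k=0$, and passing to convergent subsequences $z_k\to z$, $x^k_0\to x$ (so that $x^k_n\to f^n x$), one obtains $\{f^n z,f^n x\}\prec\overline\U$ for all $n$ with $z\neq x$, contradicting that $\U$ is a generator. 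Passing to the limit is precisely what converts the orbit--versus--pseudo-orbit comparison into an orbit--versus--orbit comparison, which is the step your direct approach cannot supply.
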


\begin{proof}
 \mbox{($1\Rightarrow2$)\;} Let $\alpha>0$ be an expansivity constant for $f$ and $\delta>0$ such that $f$ has the $\nicefrac\alpha2-\delta$ shadowing property on $\Set$. Take as $\U$ any finite cover of $M$ by balls of radius $\nicefrac\alpha2$, and as $\V$ the cover by all balls of radius $\nicefrac\delta2$. It is straightforward to check that these covers fulfils the desired requirements.

 \mbox{($2\Rightarrow\!1$)\;} As $\U$ is a generator by \cite{KR}*{Theorem 3.2} we know that $f$ is expansive. To prove the last part of the statement suppose on the contrary that $f$ does not have the shadowing property on $\Set$. Then there exists an $\varepsilon>0$ such that for every $k\in\N$ there is a $\nicefrac1k$-pseudo orbit $\xi_k=(x_n^k)_{n\in\Z}$ belonging to $\Set$ not $\varepsilon$-shadowable. Let $k_0\in\N$ such that $\nicefrac1{k_0}$ is a Lebesgue number for $\V$. Then, for each $k\geq k_0$ we have that $\xi_k$ is a $\V$-pseudo orbit, therefore there exists $z_k\in M$ that $\U$-shadows it.

 As $\xi_k$ is not $\varepsilon$-shadowable for all $k\geq k_0$ we have  $d(f^{n_k}z_k,x_{n_k}^k)\geq\varepsilon$ for suitable $n_k\in\Z$. Rearranging the indices we can assume that $n_k=0$ for each $k\geq k_0$. Taking subsequences we can also assume that $x_0^k\to x$ and $z_k\to z$. As $d(z_k,x_0^k)\geq\varepsilon$ for all $k\geq k_0$ we see that $d(z,x)\geq\varepsilon$ and then $z\neq x$. On the other hand it is easy to show that $x^k_n\to f^nx$ for all $n\in\Z$, that is, the pseudo orbits $\xi_k$ converge pointwise to the orbit of $x$. Then, as $\{f^nz_k,x_n^k\}\prec\U$ for all $n\in\Z$ and $k\geq k_0$ we deduce that $\{f^nz,f^nx\}\prec\overline\U$ for all $n\in\Z$. This contradicts that $\U$ is a generator.
\end{proof}

\begin{proposition}\label{prop:sombreado_cociente}
 Let $\Set\subseteq M^\Z$ and suppose that $f$ is $[\nicefrac\delta4,\alpha]$-expansive and has the $\nicefrac\alpha4-\delta$ shadowing property on $\Set$ for some $0<\delta\leq\nicefrac\alpha4$. Let $R=R(d,\alpha)$ as in Definition \ref{def:rel_quiv_de_Lew}, $M_R=M/R$ and $f_R$ the induced (expansive) homeomorphism of $M_R$. Then $f_R$ has the shadowing property on $\Set_R=q(\Set)=\bigl\{\bigl(q(x_n)\bigr)_{n\in\Z}:(x_n)_{n\in\Z}\in\Set\bigr\}$, where $q\colon M\to M_R$ denotes the canonical map.
\end{proposition}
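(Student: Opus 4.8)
The plan is to derive the statement from the topological reformulation of shadowing in Lemma~\ref{lem:shadowing_top}, applied to the system $(M_R,f_R,\Set_R)$. To start, since $f$ is $[\delta/4,\alpha]$-expansive and $\delta/4\le\alpha/2$, it is in particular $\alpha$-semi expansive, so $f_R$ is expansive by Theorem~\ref{teo:cociente_expnsivo}, $M_R$ is compact metrizable by Lemma~\ref{lem:cociente_metrizable}, the canonical projection $q$ is closed, and every class $[x]$ is a compact set of diameter $<\delta/4$ (Remark~\ref{obs:diam_clases}). By Lemma~\ref{lem:shadowing_top} it is then enough to exhibit a generator $\U_R$ for $f_R$ together with an open cover $\V_R$ of $M_R$ such that every $\V_R$-pseudo orbit belonging to $\Set_R$ can be $\U_R$-shadowed.

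The idea is to build both $\U_R$ and $\V_R$ as $q$-images of finite \emph{saturated} open covers of $M$: for a saturated open $W\subseteq M$ the set $q(W)$ is open and a class belongs to $q(W)$ exactly when any (equivalently, every) one of its points does, which is precisely the mechanism that transfers pseudo orbit and shadowing conditions between $M_R$ and $M$, in spite of the fact that a compatible metric on $M_R$ does not control the metric $d$ of $M$. Recall that for open $U\subseteq M$ the set $\widehat U=\{y\in M:[y]\subseteq U\}$ is saturated and open and, since $q$ is closed, $q(\widehat U)$ is open (Remark~\ref{obs:ugorro_abierto}). I would take $\V$ to be a finite subcover of $\{\widehat{B_{3\delta/8}(x)}:x\in M\}$ (it is a cover of $M$ because $\diam([x])<\delta/4$), so that every member of $\V$ has diameter $<\delta$, and put $\V_R=q(\V)$. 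For $\U$ I would fix a small $\varepsilon>0$ and a radius $r$ with $\alpha/4+\varepsilon\le r\le\alpha/2-\delta/2$ — possible because $\delta\le\alpha/4$ — choose a finite $\varepsilon$-net $x_1,\dots,x_k$ in $M$, and set $\U=\{\widehat{B_{r+\delta/4}(x_i)}:1\le i\le k\}$ and $\U_R=q(\U)$ (again a cover of $M$, hence of $M_R$, since the $x_i$ form an $\varepsilon$-net).

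Two verifications remain. First, $\U_R$ is a generator for $f_R$: if $x,y\in M$ and $\{[f^nx],[f^ny]\}\prec\overline{\U_R}$ for all $n$, then — using that $q$ is closed, so $\overline{q(\widehat U)}=q(\overline{\widehat U})$ and $\overline{\widehat{B_{r+\delta/4}(x_i)}}$ sits inside the closed $(r+\delta/4)$-ball about $x_i$, together with $\diam([x])<\delta/4$ — one obtains $d(f^nx,f^ny)<2r+\delta\le\alpha$ for every $n$, hence $x\sim y$ and $[x]=[y]$; by Remark~\ref{obs:generador} this is exactly what makes $\U_R$ a generator. Second, every $\V_R$-pseudo orbit in $\Set_R$ is $\U_R$-shadowable: write such a pseudo orbit as $([x_n])_{n\in\Z}$ with $(x_n)_{n\in\Z}\in\Set$, which is legitimate by the definition of $\Set_R$; saturation of the members of $\V$ together with their diameter bound force $fx_n$ and $x_{n+1}$ into a common member of $\V$, so $(x_n)_{n\in\Z}$ is a $\delta$-pseudo orbit lying in $\Set$, and the shadowing hypothesis then gives $z\in M$ with $d(f^nz,x_n)<\alpha/4$ for all $n$; then for each $n$, picking a net point $x_i$ with $d(x_n,x_i)<\varepsilon$ one gets $[f^nz]\cup[x_n]\subseteq B_{\alpha/4+\delta/4+\varepsilon}(x_i)\subseteq B_{r+\delta/4}(x_i)$, so $f^nz$ and $x_n$ both lie in $\widehat{B_{r+\delta/4}(x_i)}$ and hence $\{f_R^n[z],[x_n]\}\prec\U_R$; thus $[z]$ $\U_R$-shadows the pseudo orbit. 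Lemma~\ref{lem:shadowing_top} then yields that $f_R$ is expansive and has the shadowing property on $\Set_R$, as desired.

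The step I expect to be the main obstacle is precisely this passage of the pseudo orbit condition from $M_R$ down to $M$: a small displacement of classes in $M_R$ need not be realized by a small displacement of points in $M$, so one cannot argue with a metric on $M_R$ alone, and the saturated covers are what repair this. The only other delicate point is purely arithmetic — choosing $r$ and $\varepsilon$ so that $\U_R$ is simultaneously a generator and coarse enough for the shadowing to transfer; the constraints barely fit, and are tight in the extreme case $\delta=\alpha/4$.
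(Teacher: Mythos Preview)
Your proposal is correct and follows essentially the same approach as the paper: reduce via Lemma~\ref{lem:shadowing_top}, build the generator $\U_R$ and the cover $\V_R$ as $q$-images of saturated covers $\widehat{B_r(\cdot)}$ of $M$, use saturation to pull a $\V_R$-pseudo orbit in $\Set_R$ down to a $\delta$-pseudo orbit in $\Set$, shadow in $M$, and push the shadowing back up. The only differences are cosmetic choices of constants --- the paper fixes the radii $7\alpha/16$ and $\delta/2$ with an $\alpha/8$-net, whereas you parametrize by $r$ and $\varepsilon$ subject to $\alpha/4+\varepsilon\le r\le\alpha/2-\delta/2$; both fit, and your remark that the arithmetic is tight at $\delta=\alpha/4$ is exactly why the paper's particular constants work.
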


\begin{proof}
 Firstly note that as $\nicefrac\delta4\leq\nicefrac\alpha2$ and $f$ is $[\nicefrac\delta4,\alpha]$-expansive by Remark \ref{obs:diam_clases} we have that $f$ is $\alpha$-semi expansive and the diameter of the $R$-equivalence classes verify $\diam([x])<\nicefrac\delta4\leq\nicefrac\alpha{16}$ if $x\in M$. By Lemma \ref{lem:shadowing_top} to prove the thesis it is enough to show a generator $\U_R$ for $f_R$ and an open cover $\V_R$ of $M_R$ such that every $\V_R$-pseudo orbit belonging to $\Set_R$ is $\U_R$-shadowable.

 Let $F\subseteq M$ be a finite set such that $\bigcup_{x\in F}B_{\alpha/8}(x)=M$, for $x\in F$ define $U_x=B_{7\alpha/16}(x)$ and $\widehat U_x=\{y\in M:[y]\subseteq U_x\}$, and consider the families of sets $\U=\{\widehat U_x:x\in F\}$ and $\U_R=\{q(\widehat U_x):x\in F\}$. By Remark \ref{obs:ugorro_abierto} these are families of open sets. Moreover, for all $x\in M$ we can pick $y\in F$ such that $d(x,y)<\nicefrac\alpha8$, hence as $\diam([x])<\nicefrac\alpha{16}$ we have $[x]\subseteq U_y$ ($\nicefrac\alpha8+\nicefrac\alpha{16}\leq\nicefrac{7\alpha}{16}$), and therefore $x\in\widehat U_y\in\U$. This proves that $\U$ and then $\U_R$ are covers.

 We claim that $\U_R$ is a generator for $f_R$. To see this, note first that if $z\in F$ then 
 \begin{equation}\label{ec:001}
  q^{-1}\bigl(q(\widehat U_z)^-\bigr)\subseteq B_{\alpha/2}(z).
 \end{equation}
 Indeed, as $q$ is a closed map (because $M_R$ is metrizable by Lemma \ref{lem:cociente_metrizable} and $M$ is compact) it holds that $q(\widehat U_z)^-=q(\widehat U_z^-)$. Then, for all $w\in q^{-1}\bigl(q(\widehat U_z)^-\bigr)$ we have $[w]\in q(\widehat U_z)^-=q(\widehat U_z^-)$, and as $\widehat U_z\subseteq U_z=B_{7\alpha/16}(z)$ we have $[w]=[w']$ for some $w'\in M$ such that $d(z,w')\leq\nicefrac{7\alpha}{16}$. Hence $w\in B_{\alpha/2}(z)$ because $\diam([w])<\nicefrac\alpha{16}$ and $\nicefrac{7\alpha}{16}+\nicefrac{\alpha}{16}=\nicefrac\alpha2$.

 Now suppose that $x,y\in M$ and $\{f_R^n[x],f_R^n[y]\}\prec\overline\U_{\!\!R}$ for all $n\in\Z$. Hence, for each $n\in\Z$ we have $\{[f^nx],[f^ny]\}\subseteq q(\widehat U_{z_n})^-$ for suitable $z_n\in F$. Applying the relation (\ref{ec:001}) we get $\{f^nx,f^ny\}\subseteq B_{\alpha/2}(z_n)$, and therefore $d(f^nx,f^ny)<\alpha$ for all $n\in\Z$. This implies that $[x]=[y]$, proving that $\U_R$ is a generator.

 For each $x\in M$ let $V_x=B_{\delta/2}(x)$ and $\widehat V_x=\{y\in M:[y]\subseteq V_x\}$, and define the families $\widehat\V=\{\widehat V_x:x\in M\}$ and $\V_R=\{q(\widehat V_x):x\in M\}$. One can check in a similar fashion as done before with $\U$ and $\U_R$ that $\V$ and $\V_R$ are open covers. We affirm that every $\V_R$-pseudo orbit belonging to $\Set_R$ can be $\U$-shadowed.

 In order to prove the claim suppose given a $\V_R$-pseudo orbit $\xi_R\in\Set_R$. This pseudo orbit is of the form $\xi_R=([x_n])_{n\in\Z}$ for some $\xi=(x_n)_{n\in\Z}\in\Set$, and for all $n\in\Z$ we have $\{[fx_n],[x_{n+1}]\}=\{f_R[x_n],[x_{n+1}]\}\subseteq q(\widehat V_{y_n})$ for suitable $y_n\in M$. As $\widehat V_{y_n}$ is saturated it follows that $\{fx_n,x_{n+1}\}\subseteq\widehat V_{y_n}\subseteq V_{y_n}=B_{\delta/2}(y_n)$ for all $n\in\Z$. Then we see that $\xi$ is a $\delta$-pseudo orbit. Hence, by hypothesis, there exists $x\in M$ that $\nicefrac\alpha4$-shadows $\xi$, that is $d(f^nx,x_n)<\nicefrac\alpha4$ for all $n\in\Z$.

 For each $n\in\Z$ take $z_n\in F$ such that $d(x_n,z_n)<\nicefrac\alpha8$. As $\diam([f^nx])<\nicefrac\alpha{16}$ for all $n\in\Z$ and $\nicefrac\alpha4+\nicefrac\alpha8+\nicefrac\alpha{16}=\nicefrac{7\alpha}{16}$, we deduce $f_R^n[x]=[f^nx]\subseteq U_{z_n}=B_{7\alpha/16}(z_n)$, hence $f_R^n[x]\in q(\widehat U_{z_n})$ for all $n\in\Z$. Similarly one can check that $[x_n]\in q(\widehat U_{z_n})$ for all $n\in\Z$. Then we proved that $\xi_R$ is $\U_R$-shadowed by $[x]$, and we are done.
\end{proof}

\begin{definition}\label{def:semi_anosov}
 Given $\alpha>0$ we say that $f$ is an \emph{$\alpha$-semi Anosov} homeomorphism iff $f$ is $[\nicefrac\delta4,\alpha]$-expansive and has the $\nicefrac\alpha4-\delta$ shadowing property for some $0<\delta\leq\nicefrac\alpha4$.
\end{definition}

\begin{theorem}\label{teo:cociente_anosov}
 Let $M$ be a compact metrizable space, $f\colon M\to M$ a homeomorphism, $R$ an equivalence relation on $M$ compatible with $f$, $M_R=M/R$ and $f_R$ the induced quotient homeomorphism of $M_R$. The following statements are equivalent.
 \begin{enumerate}
  \item $f_R$ is an Anosov homeomorphism.
  \item $f$ is $\alpha$-semi Anosov and $R=R(d,\alpha)$ for some compatible metric $d$ on $M$ and $\alpha>0$.
 \end{enumerate}
\end{theorem}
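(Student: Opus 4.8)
The plan is to prove the two implications separately, in each case peeling off the expansive part with Theorem~\ref{teo:cociente_expnsivo} and treating shadowing with the tools of \S\ref{sec:anosov_quotients}.

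\emph{Direction $(2\Rightarrow1)$.} Assume $f$ is $\alpha$-semi Anosov and $R=R(d,\alpha)$, and fix $0<\delta\le\nicefrac\alpha4$ with $f$ being $[\nicefrac\delta4,\alpha]$-expansive and having the $\nicefrac\alpha4-\delta$ shadowing property (Definition~\ref{def:semi_anosov}). Since $\nicefrac\delta4\le\nicefrac\alpha2$, Remark~\ref{obs:diam_clases} gives that $f$ is $\alpha$-semi expansive, so $f_R$ is a Lewowicz quotient and hence, by Theorem~\ref{teo:cociente_expnsivo}, expansive. For the shadowing property I would apply Proposition~\ref{prop:sombreado_cociente} with $\Set=M^\Z$: its hypotheses are exactly that $f$ is $[\nicefrac\delta4,\alpha]$-expansive and has the $\nicefrac\alpha4-\delta$ shadowing property on $\Set$ for some $0<\delta\le\nicefrac\alpha4$, and its conclusion is that $f_R$ has the shadowing property on $\Set_R=q(M^\Z)=M_R^{\Z}$. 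Thus $f_R$ is expansive with the shadowing property, i.e.\ Anosov.

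\emph{Direction $(1\Rightarrow2)$.} Here the idea is to re-run the metric construction in the proof of $(1\Rightarrow2)$ of Theorem~\ref{teo:cociente_expnsivo} (cf.\ Remark~\ref{obs:expgordo}) with the constants tuned so that shadowing lifts through $q$. Fix compatible metrics $d$ on $M$ and $d_R$ on $M_R$ (we may assume $\diam(M,d)>0$, the other case being trivial), let $\alpha>0$ be an expansivity constant for $f_R$, choose $\varepsilon_R\in(0,\nicefrac\alpha8)$, and use that $f_R$ is Anosov to pick $\delta\in(0,\nicefrac\alpha4]$ such that every $\delta$-pseudo orbit of $f_R$ is $\varepsilon_R$-shadowed. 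Then choose $K>0$ so small that $K\diam(M,d)<\min\{\nicefrac\alpha8,\nicefrac\delta4\}$ and set $d_1(x,y)=d_R([x],[y])+Kd(x,y)$. Exactly as in Theorem~\ref{teo:cociente_expnsivo}, $d_1$ is a compatible metric, $R=R(d_1,\alpha)$, one has $d_R([x],[y])\le d_1(x,y)$, and the $R$-classes have $d_1$-diameter at most $K\diam(M,d)<\min\{\nicefrac\alpha8,\nicefrac\delta4\}$; in particular $f$ is $[\nicefrac\delta4,\alpha]$-expansive, since $d_1(f^nx,f^ny)\le\alpha$ for all $n$ forces $[x]=[y]$ and hence $d_1(x,y)<\nicefrac\delta4$.

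It remains to verify the $\nicefrac\alpha4-\delta$ shadowing property of $f$ with respect to $d_1$. Given a $\delta$-pseudo orbit $(x_n)_{n\in\Z}$ of $f$ in $(M,d_1)$, the projected bi-sequence $([x_n])_{n\in\Z}$ is a $\delta$-pseudo orbit of $f_R$ because $d_R(f_R[x_n],[x_{n+1}])\le d_1(fx_n,x_{n+1})<\delta$, so there is $z\in M$ with $d_R(f_R^n[z],[x_n])<\varepsilon_R$ for all $n$, and then for any representative $z$
$$
d_1(f^nz,x_n)=d_R([f^nz],[x_n])+Kd(f^nz,x_n)<\varepsilon_R+K\diam(M,d)<\tfrac\alpha8+\tfrac\alpha8=\tfrac\alpha4
$$
for all $n\in\Z$; that is, $z$ $\nicefrac\alpha4$-shadows $(x_n)$. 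Hence $f$ is $\alpha$-semi Anosov with $R=R(d_1,\alpha)$, which is statement~(2). I do not expect a genuine obstacle in this argument: the only real point is the bookkeeping of the four constants $\alpha$, $\varepsilon_R$, $\delta$, $K$ (chosen in that order) so that the collapsed fibres become small enough in $d_1$ that a point shadowing the projected pseudo orbit in $M_R$ automatically shadows the original one in $M$; one must only be careful to invoke Theorem~\ref{teo:cociente_expnsivo} and Proposition~\ref{prop:sombreado_cociente} with the new metric $d_1$, and to keep $\delta\le\nicefrac\alpha4$ so that it is an admissible parameter in Definition~\ref{def:semi_anosov}.
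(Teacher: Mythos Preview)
Your proposal is correct and follows essentially the same approach as the paper: the $(2\Rightarrow1)$ direction is handled, as in the paper, by a single appeal to Proposition~\ref{prop:sombreado_cociente} with $\Set=M^\Z$, and for $(1\Rightarrow2)$ you reproduce the paper's metric $d_1(x,y)=d_R([x],[y])+Kd(x,y)$ with $K$ chosen small enough to force both the $[\nicefrac\delta4,\alpha]$-expansiveness and the $\nicefrac\alpha4-\delta$ shadowing. The only cosmetic difference is in the bookkeeping: the paper fixes $\varepsilon_R=\nicefrac\alpha8$ and takes $K=\delta/(4\diam(M,d)+1)$, bounding $d_1(f^nz,x_n)<\nicefrac\alpha8+\nicefrac\delta4<\nicefrac\alpha4$, whereas you introduce an auxiliary $\varepsilon_R<\nicefrac\alpha8$ and require $K\diam(M,d)<\min\{\nicefrac\alpha8,\nicefrac\delta4\}$, reaching the same conclusion via $\varepsilon_R+K\diam(M,d)<\nicefrac\alpha8+\nicefrac\alpha8$.
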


\begin{proof}
 \mbox{($1\Rightarrow2$)\;} As in the proof of Theorem \ref{teo:cociente_expnsivo} pick compatible metrics $d$ and $d_R$ on $M$ and $M_R$, respectively, let $\alpha_R>0$ be an expansivity constant for $f_R$, choose $\alpha>0$ such that $\alpha\leq\alpha_R$, and define a compatible metric $d_1$ on $M$ by the formula
 $$
 d_1(x,y)=d_R([x],[y])+Kd(x,y),\qquad\text{if }x,y\in M,
 $$
 but this time taking the constant $K$ given by 
 $$
 K=\frac{\delta}{4\diam(M,d)+1},
 $$
 where $0<\delta\leq\nicefrac\alpha4$ is such that $f_R$ has the $\nicefrac\alpha8-\delta$ shadowing property. In a similar fashion as in the cited proof one can check that $f$ is $[\nicefrac\delta4,\alpha]$-expansive relative to $d_1$.

 We claim that $f$ also have the $\nicefrac\alpha4-\delta$ shadowing property with respect to $d_1$. To prove that suppose $\xi=(x_n)_{n\in\Z}$ is a given $\delta$-pseudo orbit of $f$ relative to $d_1$, that is, $d_1(fx_n,x_{n+1})<\delta$ for all $n\in\Z$. Then, from the formula defining $d_1$ we have that $d_R(f_R[x_n],[x_{n+1}])=d_R([fx_n],[x_{n+1}])\leq d_1(fx_n,x_{n+1})<\delta$ for all $n\in\Z$. Hence $\xi_R=([x_n])_{n\in\Z}$ is a $\delta$-pseudo orbit of $f_R$. Consequently, by the choice of $\delta$ there exists $x\in M$ such that $\xi_R$ is $\nicefrac\alpha8$-shadowed by $[x]$, that is, $d_R(f_R^n[x],[x_n])<\nicefrac\alpha8$ for all $n\in\Z$. Now, for each $n\in\Z$ we estimate
 \begin{equation*}
 \begin{split}
 d_1(f^nx,x_n)&=d_R([f^nx],[x_n])+Kd(f^nx,x_n)\\
  &= d_R(f_R^n[x],[x_n])+Kd(f^nx,x_n)\\
  &<\nicefrac\alpha8+K\diam(M,d)\\
  &<\nicefrac\alpha8+\nicefrac\delta4\\
  &<\nicefrac\alpha4.
 \end{split}
 \end{equation*}
 This shows that $\xi$ is $\nicefrac\alpha4$-shadowed by $x$ and the proof is complete.

 \mbox{($2\Rightarrow\!1$)\;} It follows from Proposition \ref{prop:sombreado_cociente} with $\Set=M^\Z$. 
\end{proof}

\begin{remark}\label{obs:anosovgordo}
 In the proof of \mbox{($1\Rightarrow2$)} in the previous proposition $\alpha>0$ can be chosen arbitrarily small compared with the expansivity constant $\alpha_R$ of $f_R$. Moreover, we can take $\delta>0$ arbitrarily small relative to the chosen $\alpha>0$. Note also that the constructed metric $d_1$ verifies $d_R([x],[y])\leq d_1(x,y)$ for all $x,y\in M$. 
\end{remark}

\begin{remark}
 In view of Lemma \ref{lem:shadowing_top} where the expansiveness and shadowing property of a system is expressed in a topological fashion, using the open covers of the space instead of any metric, it seems very possible to adapt the techniques of \S\ref{subsec:topolog_charact} to obtain a topological version of Theorem \ref{teo:cociente_anosov}, characterizing arbitrary compact and non necessarily metrizable extensions of Anosov systems.  
\end{remark}

\subsection{A stability result}\label{subsec:estabilidad_gorda}

As an application of Theorem \ref{teo:cociente_anosov} in this subsection we extend Walters' stability theorem \cite{Wa78}*{Theorem 4}, which states that Anosov homeomorphisms are \emph{topologically stable} \cite{Wa78}*{Definition 5}, proving a similar result valid for the wider class of extensions of Anosov homeomorphisms. Note that the proof is based in the \mbox{$1\Rightarrow2$} implication of Theorem \ref{teo:cociente_anosov}.

\begin{theorem}\label{teo:estabilidad} 
 Suppose that $f$ is an extension of an Anosov homeomorphism $f_R\colon M_R\to M_R$ and let $q\colon M\to M_R$ be the canonical map. Then, given a neighborhood $\NN_q$ of $q$ in $\cont(M,M_R)$ there exists a neighborhood $\NN_f$ of $f$ in $\homeo(M)$ such that for all $g\in\NN_f$ there exists $q_g\in\NN_q$ such that $f_R\circ q_g=q_g\circ g$.

 Moreover, there exists a neighborhood $\NN_q^0$ of $q$ in $\cont(M,M_R)$ such that if the above $\NN_q$ is chosen so that $\NN_q\subseteq\NN_q^0$ then the map $q_g$ is uniquely determined by $g\in\NN_f$.
\end{theorem}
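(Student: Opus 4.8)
The plan is to imitate Walters' proof of topological stability for Anosov homeomorphisms in \cite{Wa78}*{Theorem~4}, the new ingredient being that one first replaces the metric of $M$ by a convenient one supplied by Theorem~\ref{teo:cociente_anosov}. I would begin by fixing a compatible metric $d_R$ on $M_R$ --- so that the compact-open topology of $\cont(M,M_R)$ is the one induced by the associated $d_{C^0}$ --- together with an expansivity constant $\alpha_R>0$ of $f_R$ relative to $d_R$. Since $f$ is an extension of the Anosov homeomorphism $f_R$, the implication $(1\Rightarrow2)$ of Theorem~\ref{teo:cociente_anosov} and Remark~\ref{obs:anosovgordo} furnish a compatible metric $d_1$ on $M$ and constants $0<\delta\le\nicefrac\alpha4$ with $\alpha>0$ as small as we wish such that, with respect to $d_1$: $f$ is $[\nicefrac\delta4,\alpha]$-expansive and has the $\nicefrac\alpha4-\delta$ shadowing property, $R=R(d_1,\alpha)$ (Definition~\ref{def:rel_quiv_de_Lew}), and moreover $d_R([x],[y])\le d_1(x,y)$ for all $x,y\in M$. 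Given the neighborhood $\NN_q$, I would pick $\varepsilon>0$ with $\{p\in\cont(M,M_R):d_{C^0}(p,q)<\varepsilon\}\subseteq\NN_q$ and take $\alpha$ above small enough that $\alpha\le\alpha_R$ and $\alpha<4\varepsilon$. From now on distances on $M$ are measured with $d_1$, and I would set $\NN_f=\{g\in\homeo(M):d_{C^0}(f,g)<\delta\}$.

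The core of the proof is the construction of $q_g$ for $g\in\NN_f$. For $x\in M$ the orbit $(g^nx)_{n\in\Z}$ is a $\delta$-pseudo orbit of $f$, since $d_1(f(g^nx),g(g^nx))\le d_{C^0}(f,g)<\delta$, so by the shadowing property the set $S(x)$ of points that $\nicefrac\alpha4$-shadow it is non-empty; and if $y,y'\in S(x)$ then $d_1(f^ny,f^ny')\le d_1(f^ny,g^nx)+d_1(g^nx,f^ny')<\nicefrac\alpha2\le\alpha$ for all $n$, so $y$ and $y'$ are $R$-equivalent (as $R=R(d_1,\alpha)$) and $q(y)=q(y')$; hence $q_g(x):=q(S(x))$ is a well-defined point of $M_R$. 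Putting $n=0$ in the shadowing estimate and using $d_R([\cdot],[\cdot])\le d_1(\cdot,\cdot)$ yields $d_R(q_g(x),q(x))\le\nicefrac\alpha4$, so $d_{C^0}(q_g,q)\le\nicefrac\alpha4<\varepsilon$ and $q_g\in\NN_q$ once continuity is checked. Since $y\in S(x)$ implies $fy\in S(gx)$ (shift the index by one), one gets $f_Rq_g(x)=f_Rq(y)=q(fy)=q_g(gx)$, i.e.\ $f_R\circ q_g=q_g\circ g$. For the continuity of $q_g$ I would take $x_k\to x$, choose $y_k\in S(x_k)$, extract a subsequence with $y_{k_j}\to y^\ast$ (compactness of $M$), pass to the limit in $d_1(f^ny_{k_j},g^nx_{k_j})<\nicefrac\alpha4$ to get $d_1(f^ny^\ast,g^nx)\le\nicefrac\alpha4$ for all $n$, compare $y^\ast$ with any $y\in S(x)$ to obtain $d_1(f^ny^\ast,f^ny)\le\nicefrac\alpha2\le\alpha$ and hence $q(y^\ast)=q_g(x)$, and conclude $q_g(x_{k_j})=q(y_{k_j})\to q(y^\ast)=q_g(x)$ by continuity of $q$; as this holds for every subsequence, $q_g(x_k)\to q_g(x)$.

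For the uniqueness clause I would set $\NN_q^0=\{p\in\cont(M,M_R):d_{C^0}(p,q)<\nicefrac{3\alpha_R}4\}$, which depends only on $f_R$ and $d_R$. Suppose $\NN_q\subseteq\NN_q^0$, let $g\in\NN_f$, and let $p\in\NN_q$ satisfy $f_R\circ p=p\circ g$; iterating gives $f_R^np(x)=p(g^nx)$ for all $n\in\Z$. Fixing $x\in M$ and $y\in S(x)$, on the one hand $d_R(f_R^nq_g(x),q(g^nx))=d_R(q(f^ny),q(g^nx))\le d_1(f^ny,g^nx)<\nicefrac\alpha4$, and on the other hand $d_R(f_R^np(x),q(g^nx))=d_R(p(g^nx),q(g^nx))\le d_{C^0}(p,q)<\nicefrac{3\alpha_R}4$; hence $d_R(f_R^np(x),f_R^nq_g(x))<\nicefrac{3\alpha_R}4+\nicefrac\alpha4\le\alpha_R$ for all $n$ (using $\alpha\le\alpha_R$), so $\alpha_R$-expansiveness of $f_R$ forces $p(x)=q_g(x)$, and thus $p=q_g$. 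I expect the main obstacle to be the continuity of $q_g$: one must run a compactness argument in which the shadowing condition, a strict inequality, degenerates to the non-strict one upon passing to the limit --- which is harmless exactly because $R=R(d_1,\alpha)$ is defined via the closed condition ``$d_1(f^n\cdot,f^n\cdot)\le\alpha$'' --- whereas the equivariance and uniqueness are short once $q_g$ is available.
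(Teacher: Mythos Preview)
Your argument is correct and follows the same overall strategy as the paper: invoke the $(1\Rightarrow2)$ direction of Theorem~\ref{teo:cociente_anosov} (and Remark~\ref{obs:anosovgordo}) to equip $M$ with a semi-Anosov metric $d_1$ satisfying $d_R([x],[y])\le d_1(x,y)$, define $q_g(x)$ as the common $R$-class of the $\nicefrac\alpha4$-shadowing points of the $g$-orbit of $x$, and then verify equivariance, continuity, closeness to $q$, and uniqueness.

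There are, however, two genuine technical differences worth noting. First, to obtain $q_g\in\NN_q$ you choose $\alpha$ small relative to $\NN_q$ (namely $\alpha<4\varepsilon$), so that the crude bound $d_R(q_g(x),q(x))\le d_1(y,x)<\nicefrac\alpha4$ at $n=0$ already suffices; the paper instead fixes $\alpha\le\alpha_R$ independently of $\NN_q$ and then, using uniform expansiveness of $f_R$ together with a further shrinking of $\NN_f$ (so that $d_{C^0}(f^n,g^n)<\nicefrac{\alpha_R}{2}$ for $|n|\le N$), derives $d_{C^0}(q_g,q)<\varepsilon$. Your approach is shorter here. Second, for continuity of $q_g$ you run a subsequential compactness argument exploiting that $R=R(d_1,\alpha)$ is defined by a closed condition (so the limit point $y^\ast$ still lies in the right class even though the shadowing inequality becomes non-strict in the limit); the paper instead argues directly via uniform expansiveness of $f_R$, obtaining for each $\varepsilon$ an explicit $N$ and then a $\rho$ from continuity of the finitely many maps $q\circ g^n$, $|n|\le N$. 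Both are standard; your route avoids uniform expansiveness entirely, while the paper's gives a more explicit modulus of continuity. The uniqueness arguments are essentially the same, with your $\NN_q^0=B_{3\alpha_R/4}(q)$ in place of the paper's $B_{\alpha_R/2}(q)$.
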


\begin{proof}
 Let $d_R$ a compatible metric on $M_R$ and $\alpha_R>0$ an expansivity constant for $f_R$ relative to $d_R$. By Theorem \ref{teo:cociente_anosov} there exists a compatible metric $d$ on $M$ and $\alpha>0$ such that $f$ is an $\alpha$-semi Anosov homeomorphism and $R=R(d,\alpha)$. Moreover, by Remark \ref{obs:anosovgordo} we can assume
 \begin{equation}\label{ec:a0}
  \alpha\leq\alpha_R\qquad\text{and}\qquad d_R([x],[y])\leq d(x,y)\quad\text{if}\quad x,y\in M.
 \end{equation}

 Let $0<\delta\leq\nicefrac\alpha4$ such that $f$ has the $\nicefrac\alpha4-\delta$ shadowing property and consider the neighborhood $\NN_f=\{g\in\homeo(M):d_{C^0}(f,g)<\delta\}$. Given $g\in\NN_f$ for all $x\in M$ the orbit $\xi=(g^nx)_{n\in\Z}$ of $x$ under $g$ is a $\delta$-pseudo orbit of $f$. Then there exists $y_x\in M$ that $\nicefrac\alpha4$-shadows $\xi$. If $y_x'\in M$ also $\nicefrac\alpha4$-shadows $\xi$ then $d(f^ny_x,f^ny_x')<\nicefrac\alpha2$ for all $n\in\Z$, hence $y_x\sim y_x'$. Therefore, in this way each $x\in M$ determines a unique class $[y_x]\in M_R$, that is, for each $g\in\NN_f$ we have a map $q_g\colon M\to M_R$ defined as $q_g(x)=[y_x]$ if $x\in M$, where $y_x\in M$ is any point $\nicefrac\alpha4$-shadowing the $g$-orbit of $x$.

 For $g\in\NN_f$ and $x\in M$ it is easy to see that the $g$-orbit of $gx$ is $\nicefrac\alpha4$-shadowed by $fy_x$, then $q_g(gx)=[fy_x]=f_R[y_x]=f_Rq_g(x)$, that is, $f_R\circ q_g=q_g\circ g$.

 To prove that $q_g$ is continuous let $x\in M$ and suppose $\varepsilon>0$ is given. As the quotient homeomorphism $f_R$ is uniformly expansive there exists $N\in\N$ such that
 \begin{equation}\label{ec:a1}
  \text{if }u,v\in M\text{ and }d_R(f_R^n[u],f_R^n[v])<\alpha_R\text{ for all }|n|\leq N\text{ then }d_R([u],[v])<\varepsilon.
 \end{equation}
 On the other hand, as $q\circ g^n$, $|n|\leq N$, are continuous there exists $\rho>0$ such that
 \begin{equation}\label{ec:a2}
  \text{if }z\in M\text{ and }d(x,z)<\rho\text{ then }d_R\bigl([g^nx],[g^nz]\bigr)<\nicefrac{\alpha_R}2\text{ for all }|n|\leq N.
 \end{equation}
 Hence, for all $z\in M$ such that $d(x,z)<\rho$, and for $|n|\leq\N$ we can estimate
 \begin{equation*}
 \begin{split}
  d_R\bigl(f_R^nq_g(x),f_R^nq_g(z)\bigr)
  &\leq d_R\bigl(f_R^n[y_x],[g^nx]\bigr)+d_R\bigl([g^nx],[g^nz]\bigr)+d_R\bigl([g^nz],f_R^n[y_z]\bigr)\\
  &=d_R\bigl([f^ny_x],[g^nx]\bigr)+d_R\bigl([g^nx],[g^nz]\bigr)+d_R\bigl([g^nz],[f^ny_z]\bigr)\\
  &\leq d\bigl(f^ny_x,g^nx\bigr)+d_R\bigl([g^nx],[g^nz]\bigr)+d\bigl(g^nz,f^ny_z\bigr)\\
  &<\nicefrac\alpha4+\nicefrac{\alpha_R}2+\nicefrac\alpha4\leq\alpha_R,
 \end{split}
 \end{equation*}
 where we used the conditions (\ref{ec:a0}) and (\ref{ec:a2}). Then, by condition (\ref{ec:a1}) we deduce that $d_R\bigl(q_g(x),q_g(z)\bigr)<\varepsilon$ if $x,z\in M$ and $d(x,z)<\rho$, proving the continuity of $q_g$.

 To see that $q_g\in\NN_q$ pick $\varepsilon>0$ such that $B_\varepsilon(q)\subseteq\NN_q$, and let $N\in\N$ as in condition (\ref{ec:a1}). Taking a smaller neighborhood $\NN_f$ if necessary we can suppose that $d_{C^0}(f^n,g^n)<\nicefrac{\alpha_R}2$ for all $|n|\leq N$ if $g\in\NN_f$. Then, for $g\in\NN_f$, $x\in M$ and $|n|\leq N$
 \begin{equation*}
 \begin{split}
  d_R\bigl(f_R^nq_g(x),f_R^nq(x)\bigr)
  &=d_R\bigl(f_R^n[y_x],f_R^n[x]\bigr)\\
  &=d_R\bigl([f^ny_x],[f^nx]\bigr)\\
  &\leq d\bigl(f^ny_x,f^nx\bigr)\\
  &\leq d(f^ny_x,g^nx)+d(g^nx,f^nx)\\
  &<\nicefrac\alpha4+\nicefrac{\alpha_R}2<\alpha_R.
 \end{split}
 \end{equation*}
 Hence, by condition (\ref{ec:a1}) $d_R\bigl(q_g(x),q(x)\bigr)<\varepsilon$ for all $x\in M$, that is, $q_g\in B_\varepsilon(q)\subseteq\NN_q$.

 For the last statement it is enough to take $\NN_q^0=B_{\alpha_R/2}(q)$: If $q_g,q_g'\in\NN_q^0$ verifies $f_R\circ q_g=q_g\circ g$ and $f_R\circ q_g'=q_g'\circ g$, then for all $x\in M$ and $n\in\Z$ we have
 $$
 d_R\bigl(f_R^nq_g(x),f_R^nq_g'(x)\bigr)=d_R\bigl(q_g(g^nx),q_g'(g^nx)\bigr)<\alpha_R.
 $$
 Hence $q_g(x)=q_g'(x)$ for all $x\in M$ because $\alpha_R$ is an expansivity constant for $f_R$.
\end{proof}

\begin{remark}
 If in the preceding Theorem \ref{teo:estabilidad} we consider the case in which $f=f_R$ is an Anosov homeomorphism and $q$ is the identity map of $M$, then we recover Walters' stability theorem \cite{Wa78}*{Theorem 4} in this special case. 
\end{remark}

\subsection{Shadowing envelopes}\label{subsec:envovente}

In this subsection we give another application of our study of extensions of Anosov systems, showing in Theorem \ref{teo:envolvente} that, loosely speaking, any expansive system can be enlarged to an expansive system satisfying the shadowing property for the pseudo orbits of the original system (see Definition \ref{def:envelope}). The proof is based on Proposition \ref{prop:sombreado_cociente} which was used to prove the \mbox{$2\Rightarrow1$} implication of Theorem \ref{teo:cociente_anosov}.

\medskip

Consider the space $\Sigma=M^Z$ of bi--sequences in $M$ with the product topology. This is a compact metrizable space. Concretely, we will use the compatible metric
$$\textstyle
d_\Sigma(\xi,\eta)=\sum_{k\in\Z}\frac{d(x_k,y_k)}{2^{|k|}},
$$
where $\xi=(x_k)_{k\in\Z}$ and $\eta=(y_k)_{k\in\Z}$ are elements of $\Sigma$. Let  $\sigma\colon\Sigma\to\Sigma$ be the \emph{shift} homeomorphism $\sigma(x_k)_{k\in\Z}=(x_{k+1})_{k\in\Z}$ if $(x_k)_{k\in\Z}\in\Sigma$, and $\iota\colon M\to\Sigma$ the embedding $\iota(x)=(f^nx)_{n\in\Z}$ if $x\in M$. It is easy to see that we have $\sigma\circ\iota=\iota\circ f$, that is, there is a copy of the system $(M,f)$ inside the system $(\Sigma,\sigma)$.

\begin{lemma}\label{lem:sombreado_shift}
 The dynamical system $(\Sigma,\sigma)$ has the shadowing property on the set $\Set=(\im\iota)^\Z$. Moreover, for every $\varepsilon>0$ there exists $\delta>0$ such that every $\delta$-pseudo orbit $\Xi=\bigl(\iota(x_n)\bigr)_{n\in\Z}$ belonging to $\Set$, where $x_n\in M$ for all $n\in\Z$, is $\varepsilon$-shadowed precisely by the point $\xi=(x_k)_{k\in\Z}\in\Sigma$.
\end{lemma}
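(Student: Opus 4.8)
The plan is as follows. Since $\iota$ is injective, every $\delta$-pseudo orbit of $\sigma$ lying in $\Set=(\im\iota)^\Z$ is of the form $\Xi=\bigl(\iota(x_n)\bigr)_{n\in\Z}$ for uniquely determined points $x_n\in M$, so the first assertion follows from the ``moreover'' part, which is what I would prove. Fixing $\varepsilon>0$, I would show that the candidate point $\xi=(x_k)_{k\in\Z}$ (its $k$-th coordinate being the $0$-th coordinate of $\Xi_k=\iota(x_k)$) does $\varepsilon$-shadow $\Xi$ as soon as $\delta$ is small enough, by estimating, for each $n\in\Z$,
$$
d_\Sigma(\sigma^n\xi,\Xi_n)=\sum_{k\in\Z}\frac{d(x_{k+n},f^kx_n)}{2^{|k|}}
$$
coordinate by coordinate.

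First I would unwind the pseudo orbit hypothesis. From $\sigma\circ\iota=\iota\circ f$ one has $\sigma\Xi_n=\iota(fx_n)$, so the condition $d_\Sigma(\sigma\Xi_n,\Xi_{n+1})<\delta$ reads $\sum_{i\in\Z}2^{-|i|}d(f^{i+1}x_n,f^ix_{n+1})<\delta$ for every $n\in\Z$, and since all summands are nonnegative each of them obeys $d(f^{i+1}x_n,f^ix_{n+1})<2^{|i|}\delta$. The key step would then be a telescoping estimate for $d(x_{k+n},f^kx_n)$. For $k\geq1$ I would chain through the points $f^jx_{k+n-j}$, $0\leq j\leq k$, which run from $x_{k+n}$ to $f^kx_n$: the distance between two consecutive ones is exactly the summand $d\bigl(f^{j+1}x_{k+n-j-1},f^jx_{k+n-j}\bigr)<2^{j}\delta$ of the pseudo orbit inequality based at $k+n-j-1$, so $d(x_{k+n},f^kx_n)<(2^k-1)\delta$. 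Symmetrically, for $k\leq-1$ I would chain through the points $f^{k+j}x_{n-j}$, $0\leq j\leq|k|$, getting $d(x_{k+n},f^kx_n)<2(2^{|k|}-1)\delta$. Either way $d(x_{k+n},f^kx_n)<2^{|k|+1}\delta$ for every $n\in\Z$, and it is $0$ when $k=0$.

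Finally I would assemble the bound. Choose $K\in\N$ with $2^{1-K}\diam M<\nicefrac\varepsilon2$, and then $\delta>0$ with $4K\delta<\nicefrac\varepsilon2$. Given a $\delta$-pseudo orbit $\Xi=\bigl(\iota(x_n)\bigr)_{n\in\Z}$ in $\Set$ and any $n\in\Z$, the tail $\sum_{|k|>K}2^{-|k|}d(x_{k+n},f^kx_n)\leq\diam M\sum_{|k|>K}2^{-|k|}=2^{1-K}\diam M<\nicefrac\varepsilon2$ is controlled by the trivial bound $d(x_{k+n},f^kx_n)\leq\diam M$, while the central block $\sum_{1\leq|k|\leq K}2^{-|k|}d(x_{k+n},f^kx_n)<\sum_{1\leq|k|\leq K}2\delta=4K\delta<\nicefrac\varepsilon2$ is controlled by the telescoping estimate, using that $2^{-|k|}\cdot2^{|k|+1}\delta=2\delta$. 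Adding these and the vanishing $k=0$ term yields $d_\Sigma(\sigma^n\xi,\Xi_n)<\varepsilon$ for all $n\in\Z$, i.e.\ $\xi$ $\varepsilon$-shadows $\Xi$, as required. I do not foresee a real obstacle; the only delicate point is the bookkeeping with the weights $2^{-|k|}$ in $d_\Sigma$ when setting up the two chains, in particular verifying that the consecutive distances along them coincide exactly with individual summands of the pseudo orbit inequalities at appropriately shifted base indices, which is what makes any appeal to uniform continuity of $f$ unnecessary.
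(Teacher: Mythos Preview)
Your argument is correct, but it differs from the paper's in a noteworthy way. The paper only extracts from the $\sigma$-pseudo orbit condition the single coordinate $i=0$, obtaining that $(x_n)_{n\in\Z}$ is a $\delta$-pseudo orbit of $f$; it then invokes (implicitly, via uniform continuity of the iterates $f^k$, $|k|\le N$) the standard fact that for a $\delta$-pseudo orbit of $f$ one can make $d(f^kx_n,x_{n+k})<\rho$ for $|k|\le N$ by taking $\delta$ small, and finishes with the obvious tail estimate. You instead exploit the \emph{full} $\sigma$-pseudo orbit inequality, reading off $d(f^{i+1}x_n,f^ix_{n+1})<2^{|i|}\delta$ for every $i$, and telescope along the chains $f^jx_{k+n-j}$ so that each link is literally one of these weighted summands; this yields the explicit bound $d(x_{k+n},f^kx_n)<2^{|k|+1}\delta$ with no appeal to continuity of $f$ at all. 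The paper's route is shorter and more conceptual; yours is more self-contained and quantitative, and makes transparent that the result uses nothing about $f$ beyond the definition of $\iota$ and the metric $d_\Sigma$.
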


\begin{proof}
 Given $\varepsilon>0$ let $\rho>0$ and $N\in\N$ such that if $\eta=(y_k)_{k\in\Z}$ and $\zeta=(z_k)_{k\in\Z}$
 \begin{equation}\label{ec:x01}
 d(y_k,z_k)<\rho\text{ for all }|k|\leq N\text{ implies } d_\Sigma(\eta,\zeta)<\varepsilon.
 \end{equation}
 For these $\rho>0$ and $N\in\N$ let $\delta>0$ such that for $\xi=(x_n)_{n\in\Z}$ we have
 \begin{equation}\label{ec:x02}
  \text{if }\xi\text{ is a }\delta\text{-pseudo orbit of }f\text{ then }d(f^kx_n,x_{n+k})<\rho\text{ for all }|k|\leq N,n\in\Z.
 \end{equation}
 Suppose that $\Xi=(\xi_n)_{n\in\Z}$ is a $\delta$-pseudo orbit of $\sigma$ that belongs to $\Set$. Then, for each $n\in\Z$ we have $\xi_n=\iota(x_n)$ for some $x_n\in M$, that is, $\xi_n=(f^kx_n)_{k\in\Z}$. As $\Xi$ is a $\delta$-pseudo orbit of $\sigma$ for all $n\in\Z$ we can estimate
 \begin{equation*}
 \begin{split}
  d(fx_n,x_{n+1})
  &\leq d_\Sigma\bigl((f^{k+1}x_n)_{k\in\Z},(f^kx_{n+1})_{k\in\Z}\bigr)\\
  &=d_\Sigma\bigl(\sigma(f^kx_n)_{k\in\Z},(f^kx_{n+1})_{k\in\Z}\bigr)\\
  &=d_\Sigma(\sigma\xi_n,\xi_{n+1})<\delta.
 \end{split}
 \end{equation*}
 Therefore the bi--sequence $\xi=(x_n)_{n\in\Z}$ is a $\delta$-pseudo orbit of $f$. Hence, applying condition (\ref{ec:x02}) and then condition (\ref{ec:x01}) with $\eta=\xi_n$ and $\zeta=\sigma^n\xi$ we deduce that
 $$
 d_\Sigma(\xi_n,\sigma^n\xi)=d_\Sigma\bigl((f^kx_n)_{k\in\Z},(x_{n+k})_{k\in\Z}\bigr)<\varepsilon
 $$
 for all $n\in\Z$. This proves that $\Xi$ is $\varepsilon$-shadowed by $\xi$ and we are done.
\end{proof}

\begin{definition}\label{def:envelope}
 Let $(M',d')$ a compact metric space and $f'\colon M'\to M'$ a homeomorphism. We say that the system $(M',f')$ is a \emph{shadowing envelope} of $(M,f)$ iff there exists an embedding (a continuous injective map) $\mu\colon M\to M'$ such that $f'\circ\mu=\mu\circ f$, and for every $\varepsilon>0$ there exists $\delta>0$ such that for every $\delta$-pseudo orbit $(x_n)_{n\in\Z}$ of $f$ there exists $x'\in M'$ such that $d'\bigl(f'^nx',\mu(x_n)\bigr)<\varepsilon$ for all $n\in\Z$.
\end{definition}

Note that if $(M',f')$ is a shadowing envelope of $(M,f)$ then replacing the metric $d$ of $M$ by the (equivalent) \emph{pull back} metric $d_\mu$, given by $d_\mu(x,y)=d'\bigl(\mu(x),\mu(y)\bigr)$ if $x,y\in M$, we can identify $(M,d_\mu)$ with the subspace $\im\mu\subseteq M'$ and think $M\subseteq M'$ and $f=f'|_M$. In this context the shadowing condition in Definition \ref{def:envelope} can be restated saying that $(M',f')$ has the shadowing property for the pseudo orbits of $M$, that is on the set $\Set=M^\Z$: for every $\varepsilon>0$ there exists $\delta>0$ such that every $\delta$-pseudo orbit of $M$ is $\varepsilon$-shadowed by some point $x'\in M'$.

By Lemma \ref{lem:sombreado_shift} we see that $(\Sigma,\sigma)$ is a shadowing envelope for every $f$, the map $\iota$ playing the role of the embedding $\mu$ of Definition \ref{def:envelope}. We will say more in Theorem \ref{teo:envolvente} if $f$ is expansive.

\medskip

Let us recall the following result from \cite{Ach} rephrasing expansiveness.

\begin{proposition}[\cite{Ach}*{Proposition 3.1}]\label{prop:po_semiexp}
 Let $\alpha>0$. The following conditions are equivalent.
 \begin{enumerate}
  \item[1.] $f$ is expansive with expansivity constant $\alpha$.
  \item[2.] For every $\varepsilon>0$ there exists $\delta>0$ such that
  $$
  \qquad\text{if}\quad d(x_n,y_n)\leq\alpha\text{ for all }n\in\Z\quad\text{then}\quad d(x_n,y_n)<\varepsilon\text{ for all }n\in\Z,
  $$
  for every pair of $\delta$-pseudo orbits $(x_n)_{n\in\Z}$ and $(y_n)_{n\in\Z}$ of $f$.
 \end{enumerate}
\end{proposition}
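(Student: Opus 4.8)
The plan is to establish the two implications separately; all the content is in $1\Rightarrow2$, while $2\Rightarrow1$ is a one-line observation. For $2\Rightarrow1$, I would take $x,y\in M$ with $d(f^nx,f^ny)\le\alpha$ for all $n\in\Z$ and note that the orbits $(f^nx)_{n\in\Z}$ and $(f^ny)_{n\in\Z}$ are $\delta$-pseudo orbits of $f$ for \emph{every} $\delta>0$; so condition 2, applied with an arbitrary $\varepsilon>0$, forces $d(x,y)<\varepsilon$, and letting $\varepsilon\to0$ gives $x=y$. Hence $\alpha$ is an expansivity constant for $f$.

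For $1\Rightarrow2$ I would argue by contradiction. Assuming $f$ is $\alpha$-expansive but condition 2 fails, fix $\varepsilon_0>0$ for which no $\delta$ works; then for each $k\in\N$ there are $\nicefrac1k$-pseudo orbits $\xi^k=(x^k_n)_{n\in\Z}$ and $\eta^k=(y^k_n)_{n\in\Z}$ of $f$ with $d(x^k_n,y^k_n)\le\alpha$ for all $n$, yet $d(x^k_{m_k},y^k_{m_k})\ge\varepsilon_0$ for some $m_k\in\Z$. Replacing $\xi^k$ and $\eta^k$ by their shifts $\sigma^{m_k}\xi^k$ and $\sigma^{m_k}\eta^k$ — still $\nicefrac1k$-pseudo orbits with all coordinates at distance $\le\alpha$ — I may assume $m_k=0$, so $d(x^k_0,y^k_0)\ge\varepsilon_0$ for every $k$.

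The final step is a compactness argument in the product space $\Sigma=M^\Z$: after passing to a subsequence I get $\xi^k\to\xi=(x_n)_{n\in\Z}$ and $\eta^k\to\eta=(y_n)_{n\in\Z}$ in $\Sigma$, i.e.\ coordinatewise. Since $d(fx^k_n,x^k_{n+1})<\nicefrac1k$ and $f$ is continuous, the limit satisfies $fx_n=x_{n+1}$ for all $n$, so $\xi$ is the orbit of $x_0$ and likewise $\eta$ the orbit of $y_0$; passing to the limit in $d(x^k_n,y^k_n)\le\alpha$ gives $d(f^nx_0,f^ny_0)\le\alpha$ for all $n$, so $x_0=y_0$ by $\alpha$-expansiveness. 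This contradicts $d(x_0,y_0)\ge\varepsilon_0$ inherited from $d(x^k_0,y^k_0)\ge\varepsilon_0$.

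I expect no serious obstacle: the only point requiring a little care is that the limit must be taken simultaneously over all indices $n\in\Z$, which is exactly what the compactness of $\Sigma=M^\Z$ (or, equivalently, a diagonal subsequence over $n=0,\pm1,\pm2,\dots$) provides; the verification of the pseudo-orbit and distance conditions under shifting and under the limit is routine.
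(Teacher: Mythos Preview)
Your argument is correct, and both implications are handled properly: the shift to normalize $m_k=0$, the compactness of $\Sigma=M^\Z$ to extract coordinatewise limits, and the passage from $\nicefrac1k$-pseudo orbits to genuine orbits in the limit all go through exactly as you describe.

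There is, however, nothing in this paper to compare your proof against. Proposition~\ref{prop:po_semiexp} is not proved here; it is quoted from \cite{Ach}*{Proposition 3.1} with the preface ``Let us recall the following result from \cite{Ach}''. So the paper provides no proof of its own for this statement. Your compactness-and-contradiction approach is the natural one and is almost certainly what appears in the cited reference, but strictly speaking the comparison you were asked about is vacuous for this particular proposition.
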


In the next Lemma \ref{lem:expgordo_shift} we show a nice connection between expansiveness and $[\varepsilon,\alpha]$-expansiveness, providing a family of examples of $[\varepsilon,\alpha]$-expansive systems.

\medskip

Given $\rho>0$ let $\Sigma^\rho=\{(x_k)_{k\in\Z}\in M^\Z:d(fx_k,x_{k+1})\leq\rho\text{ for all }k\in\Z\}$. This is a compact $\sigma$-invariant subset of $\Sigma$ that contains the copy $\im\iota$ of $M$.

\begin{lemma}\label{lem:expgordo_shift}
 If $f$ is expansive with expansivity constant $\alpha>0$ then for each $\varepsilon>0$ there exists $\rho>0$ such that the system $(\Sigma^\rho,\sigma)$ is $[\varepsilon,\alpha]$-expansive.
\end{lemma}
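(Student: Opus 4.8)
The plan is to exploit the rephrasing of expansiveness provided by Proposition \ref{prop:po_semiexp}, which is precisely the link between expansiveness of $f$ and the behaviour of $\delta$-pseudo orbits that lie uniformly $\alpha$-close. First I would fix $\varepsilon>0$; after possibly enlarging the metric on $\Sigma$ by a harmless constant (or just absorbing a factor), it suffices to find $\rho>0$ so that whenever $\xi=(x_k)_{k\in\Z}$ and $\eta=(y_k)_{k\in\Z}$ are points of $\Sigma^\rho$ with $d_\Sigma(\sigma^n\xi,\sigma^n\eta)\leq\alpha$ for all $n\in\Z$, then $d_\Sigma(\xi,\eta)<\varepsilon$. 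The point is that membership in $\Sigma^\rho$ says exactly that $\xi$ and $\eta$ are $\rho$-pseudo orbits of $f$, and the coordinatewise distances $d(x_k,y_k)$ are controlled by the $d_\Sigma$-distances of the shifts $\sigma^k\xi,\sigma^k\eta$ (indeed $d(x_k,y_k)\leq d_\Sigma(\sigma^k\xi,\sigma^k\eta)$ since the $0$-th coordinate term dominates). So the hypothesis $d_\Sigma(\sigma^n\xi,\sigma^n\eta)\leq\alpha$ for all $n$ forces $d(x_n,y_n)\leq\alpha$ for all $n$.

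The key step is then to invoke Proposition \ref{prop:po_semiexp}: given the target $\varepsilon>0$, first pick an auxiliary $\varepsilon'>0$ small enough that coordinatewise closeness within $\varepsilon'$ on a long enough central block $|k|\leq N$, together with $\diam M$ controlling the tails, yields $d_\Sigma<\varepsilon$ (this is the standard ``truncate the geometric series'' estimate: choose $N$ with $\sum_{|k|>N}\diam(M)/2^{|k|}<\varepsilon/2$ and then $\varepsilon'<\varepsilon/2$ will do, after summing $\varepsilon'\sum 2^{-|k|}\le 3\varepsilon'$, so take $\varepsilon'$ smaller accordingly). Apply Proposition \ref{prop:po_semiexp} (direction $1\Rightarrow2$) with this $\varepsilon'$ to obtain a $\delta>0$ such that any two $\delta$-pseudo orbits of $f$ that stay uniformly $\alpha$-close are in fact uniformly $\varepsilon'$-close. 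Now set $\rho=\delta$. Then for $\xi,\eta\in\Sigma^{\rho}$ as above, both are $\delta$-pseudo orbits of $f$, and $d(x_n,y_n)\leq\alpha$ for all $n$, so by the choice of $\delta$ we get $d(x_n,y_n)<\varepsilon'$ for all $n$; summing the geometric series gives $d_\Sigma(\xi,\eta)<\varepsilon$, which is exactly $[\varepsilon,\alpha]$-expansiveness of $(\Sigma^\rho,\sigma)$ once we note that $\Sigma^\rho$ is compact and $\sigma$-invariant (already stated in the excerpt).

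One technical wrinkle to attend to is that the inequality $d_\Sigma(\sigma^n\xi,\sigma^n\eta)\le\alpha$ should really be used for all $n\in\Z$ simultaneously, and one must be slightly careful that the expansivity constant of $f$ used in Proposition \ref{prop:po_semiexp} is literally the same $\alpha$ appearing in the claimed $[\varepsilon,\alpha]$-expansiveness — which it is, by hypothesis $f$ is $\alpha$-expansive, so $\alpha$ is a legitimate expansivity constant to feed into that proposition. A second minor point: one wants $d(x_n,y_n)\le\alpha$, not $<\alpha$; since Proposition \ref{prop:po_semiexp} is stated with ``$\le\alpha$'' in the hypothesis this matches directly, so no $\varepsilon$-fudge is needed there. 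I do not anticipate a serious obstacle here — the whole argument is essentially a translation of Proposition \ref{prop:po_semiexp} into the shift picture. The only thing requiring a little care is the bookkeeping of the two ``$\varepsilon$'s'' (the target $\varepsilon$ on $\Sigma$ versus the coordinatewise $\varepsilon'$) and the tail estimate of the metric $d_\Sigma$, which is routine.
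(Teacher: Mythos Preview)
Your proposal is correct and follows essentially the same route as the paper's proof: extract $d(x_n,y_n)\le\alpha$ from the $0$-th coordinate of $d_\Sigma(\sigma^n\xi,\sigma^n\eta)\le\alpha$, apply Proposition~\ref{prop:po_semiexp} to the two $\rho$-pseudo orbits to get uniform coordinatewise closeness, and sum. The only cosmetic difference is that your ``truncate the series at $|k|\le N$ and bound the tails by $\diam M$'' step is unnecessary---since Proposition~\ref{prop:po_semiexp} already gives $d(x_k,y_k)<\varepsilon'$ for \emph{all} $k\in\Z$, you can simply take $\varepsilon'=\varepsilon/3$ and sum directly, as the paper does; also be mindful that $\Sigma^\rho$ uses $\le\rho$ while $\delta$-pseudo orbits use $<\delta$, so strictly speaking take $\rho<\delta$ rather than $\rho=\delta$.
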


\begin{proof}
 Let $\xi=(x_k)_{k\in\Z}$ and $\eta=(y_k)_{k\in\Z}$ denote generic elements of $\Sigma$. Given $\varepsilon>0$ there exists $\varepsilon_1>0$ such that if $d(x_k,y_k)<\varepsilon_1$ for all $k\in\Z$ then $d_\Sigma(\xi,\eta)<\varepsilon$. By Proposition \ref{prop:po_semiexp} there exists $\rho>0$ such that if $\xi,\eta\in\Sigma^\rho$ and $d(x_k,y_k)\leq\alpha$ for all $k\in\Z$ then $d(x_k,y_k)<\varepsilon_1$ for all $k\in\Z$. This $\rho>0$ works because if $\xi,\eta\in\Sigma^\rho$ and $d_\Sigma(\sigma^n\xi,\sigma^n\eta)\leq\alpha$ for all $n\in\Z$ then $d(x_n,y_n)\leq\alpha$ for all $n\in\Z$, hence $d(x_n,y_n)<\varepsilon_1$ for all $n\in\Z$, and therefore $d_\Sigma(\xi,\eta)<\varepsilon$.
\end{proof}

We are ready now to present the main result of this subsection.

\begin{theorem}\label{teo:envolvente}
 If $f$ is expansive then $f$ has an expansive shadowing envelope.
\end{theorem}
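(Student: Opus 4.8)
The plan is to realize the envelope as a Lewowicz quotient of a subshift $(\Sigma^\rho,\sigma)$. Let $\alpha>0$ be an expansivity constant for $f$. Applying Lemma~\ref{lem:expgordo_shift} with $\varepsilon=\nicefrac\alpha2$, I would fix $\rho>0$ so that $(\Sigma^\rho,\sigma)$ is $[\nicefrac\alpha2,\alpha]$-expansive, i.e. $\alpha$-semi expansive with respect to $d_\Sigma$ (which is a compatible metric on the compact metrizable space $\Sigma^\rho$, a closed subset of $\Sigma$). Let $R=R(d_\Sigma,\alpha)$ be the equivalence relation of Definition~\ref{def:rel_quiv_de_Lew} on $\Sigma^\rho$, let $\Sigma'=\Sigma^\rho/R$ with canonical map $q\colon\Sigma^\rho\to\Sigma'$, and let $\sigma'=\sigma_R$ be the induced homeomorphism. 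By Lemma~\ref{lem:cociente_metrizable} the space $\Sigma'$ is compact metrizable; fix a compatible metric $d'$ on it. The implication $2\Rightarrow1$ of Theorem~\ref{teo:cociente_expnsivo} then gives that $\sigma'$ is expansive, so it only remains to show that $(\Sigma',\sigma')$ is a shadowing envelope of $(M,f)$ in the sense of Definition~\ref{def:envelope}.

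The candidate embedding is $\mu=q\circ\iota\colon M\to\Sigma'$. Since each point $\iota(x)=(f^kx)_{k\in\Z}$ is an honest orbit of $f$ we have $\im\iota\subseteq\Sigma^\rho$, so $\mu$ is a well-defined composition of continuous maps; as $M$ is compact and $\Sigma'$ is Hausdorff, $\mu$ is automatically a topological embedding once it is known to be injective, and it intertwines the dynamics because $\sigma'\circ q\circ\iota=q\circ\sigma\circ\iota=q\circ\iota\circ f$. For injectivity, suppose $\mu(x)=\mu(y)$; then $\iota(x)$ and $\iota(y)$ are $R$-equivalent, that is $d_\Sigma(\sigma^n\iota(x),\sigma^n\iota(y))\leq\alpha$ for all $n\in\Z$, and since $\sigma^n\iota(z)=\iota(f^nz)$ this reads $d_\Sigma(\iota(f^nx),\iota(f^ny))\leq\alpha$ for all $n$. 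Comparing with the $k=0$ term of the series defining $d_\Sigma$ gives $d_\Sigma(\iota(a),\iota(b))\geq d(a,b)$ for all $a,b\in M$, whence $d(f^nx,f^ny)\leq\alpha$ for all $n$ and therefore $x=y$, because $\alpha$ is an expansivity constant for $f$.

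For the shadowing requirement I would first observe that $(\Sigma^\rho,\sigma)$ is itself a shadowing envelope of $(M,f)$ via $\iota$. Given $\varepsilon>0$, Lemma~\ref{lem:sombreado_shift} produces $\delta_0>0$ such that every $\delta_0$-pseudo orbit of $\sigma$ of the form $(\iota(x_n))_{n\in\Z}$ is $\varepsilon$-shadowed precisely by $\xi=(x_n)_{n\in\Z}$; using the uniform continuity of $\iota$ on the compact space $M$, I would then choose $0<\delta\leq\rho$ small enough that any $\delta$-pseudo orbit $(x_n)_{n\in\Z}$ of $f$ turns $(\iota(x_n))_{n\in\Z}$ into a $\delta_0$-pseudo orbit of $\sigma$. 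The shadowing point $\xi=(x_n)_n$ then lies in $\Sigma^\rho$ since $d(fx_n,x_{n+1})<\delta\leq\rho$, and $d_\Sigma(\sigma^n\xi,\iota(x_n))<\varepsilon$ for all $n$. To pass to the quotient, note that $q$ is uniformly continuous ($\Sigma^\rho$ being compact) and that $(\sigma')^nq=q\sigma^n$; hence, given $\varepsilon>0$, I would pick $\varepsilon'>0$ with $d'(q(\zeta),q(\zeta'))<\varepsilon$ whenever $\zeta,\zeta'\in\Sigma^\rho$ and $d_\Sigma(\zeta,\zeta')<\varepsilon'$, apply the previous step with $\varepsilon'$ in place of $\varepsilon$ to obtain $\delta$, and conclude that every $\delta$-pseudo orbit $(x_n)$ of $f$ is $\varepsilon$-shadowed in $\Sigma'$ by $q(\xi)$, since $d'\bigl((\sigma')^nq(\xi),\mu(x_n)\bigr)=d'\bigl(q(\sigma^n\xi),q(\iota(x_n))\bigr)<\varepsilon$ for all $n$. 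This gives that $(\Sigma',\sigma')$ is an expansive shadowing envelope of $(M,f)$.

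The step I expect to need the most care is keeping the constants aligned in the shadowing part, in the same spirit as in Proposition~\ref{prop:sombreado_cociente}: one must ensure that the point produced by Lemma~\ref{lem:sombreado_shift} to shadow $(\iota(x_n))_n$ actually lands inside the subshift $\Sigma^\rho$, not merely in $\Sigma$, which is arranged by taking the pseudo orbit tolerance below $\rho$ — harmless, since that tolerance is ours to choose and the semi expansiveness of $\Sigma^\rho$ only relies on the fixed gap $\nicefrac\alpha2$. Because the quotient map $q$ transfers the shadowing property at every scale through its uniform continuity, no quantitative matching between a shadowing radius and the expansivity constant is needed here.
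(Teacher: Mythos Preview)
Your argument is correct and follows the same overall construction as the paper: build the Lewowicz quotient of $(\Sigma^\rho,\sigma)$ and embed $M$ via $\mu=q\circ\iota$. The difference lies in how the shadowing is transferred to the quotient. The paper first fixes $\delta\leq\nicefrac\alpha4$ via Lemma~\ref{lem:sombreado_shift} so that $(\Sigma^\rho,\sigma)$ enjoys the $\nicefrac\alpha4$--$\delta$ shadowing property on $\iota(\Sigma^\rho)$, then chooses $\rho$ so that $(\Sigma^\rho,\sigma)$ is $[\nicefrac\delta4,\alpha]$-expansive, and finally invokes Proposition~\ref{prop:sombreado_cociente} to conclude that $\sigma_R$ has the shadowing property on $q(\iota(\Sigma^\rho))$; the envelope condition is then read off from that. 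You instead take only $[\nicefrac\alpha2,\alpha]$-expansiveness on $\Sigma^\rho$, bypass Proposition~\ref{prop:sombreado_cociente} entirely, and push the shadowing through $q$ by plain uniform continuity, using that Lemma~\ref{lem:sombreado_shift} hands you the explicit shadowing point $\xi=(x_n)_n$, which lands in $\Sigma^\rho$ once $\delta\leq\rho$. This is more elementary and avoids the delicate alignment of constants; the paper's route, on the other hand, illustrates that the envelope it constructs fits the semi Anosov framework of \S\ref{sec:anosov_quotients} on the relevant set of pseudo orbits, which is thematically the point of that section.
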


\begin{proof}
 Consider the embedding $\iota\colon M\to\Sigma$ and for a bi--sequence $\xi=(x_n)_{n\in\Z}$ of elements of $M$ denote the corresponding bi--sequence in $\Sigma$ as $\iota(\xi)=\bigl(\iota(x_n)\bigr)_{n\in\Z}$.

 Let $\alpha>0$ be an expansivity constant of $f$. By Lemma \ref{lem:sombreado_shift} there exists $\delta>0$, which can be assumed $\delta\leq\nicefrac\alpha4$, such that every $\delta$-pseudo orbit $\Xi$ of $\sigma$ belonging to $\Set=(\im\iota)^\Z$ is $\nicefrac\alpha4$-shadowable, and moreover if $\Xi=\iota(\xi)$, where $\xi\in M^\Z$, then $\xi$ as an element of $\Sigma$ is a shadowing point for $\Xi$. For this $\delta>0$ by Lemma \ref{lem:expgordo_shift} there exists $\rho>0$ such that $(\Sigma^\rho,\sigma)$ is $[\nicefrac\delta4,\alpha]$-expansive.

 We claim that the system $(\Sigma^\rho,\sigma)$ has the $\nicefrac\alpha4-\delta$ shadowing property on the set $\Set^\rho=\iota(\Sigma^\rho)=\{\iota(\xi):\xi\in\Sigma^\rho\}$. Indeed, if $\xi\in\Sigma^\rho$ and $\iota(\xi)$ is a $\delta$-pseudo orbit of $\sigma$, then as we pointed out above $\iota(\xi)$ is $\nicefrac\alpha4$-shadowed by $\xi$ which belongs to $\Sigma^\rho$.

 Now consider the equivalence relation $R=R(d_\Sigma,\alpha)$ on the space $\Sigma^\rho$ as in Definition \ref{def:rel_quiv_de_Lew}. Denote $\Sigma^\rho_R=\Sigma^\rho/R$ the quotient space, $q\colon\Sigma^\rho\to\Sigma^\rho_R$ the canonical map, and $\sigma_R\colon\Sigma^\rho_R\to\Sigma^\rho_R$ the induced homeomorphism. By Theorem \ref{teo:cociente_expnsivo} we know that $\sigma_R$ is expansive, and by Proposition \ref{prop:sombreado_cociente} we have that $\sigma_R$ has the shadowing property on $q(\Set^\rho)$.

 We affirm that $(\Sigma^\rho_R,\sigma_R)$ is a shadowing envelope of $(M,f)$ with embedding map $\mu\colon M\to\Sigma^\rho_R$ given by $\mu=q\circ\iota$ (see Definition \ref{def:envelope}). Certainly $\mu$ is continuous, and $\sigma_R\circ\mu=\mu\circ f$ because $\sigma\circ\iota=\iota\circ f$ and $\sigma_R\circ q=q\circ\sigma$. To prove injectivity suppose $x,y\in M$ and $\mu(x)=\mu(y)$, that is $\iota(x)\sim\iota(y)$, where $\sim$ denotes $R$-equivalence. Then, as $\iota(z)=(f^kz)_{k\in\Z}$ if $z\in M$, for all $n\in\Z$ we have
 $$
 d(f^nx,f^ny)\leq d_\Sigma\bigl((f^{n+k}x)_{k\in\Z},(f^{n+k}y)_{k\in\Z}\bigr)=d_\Sigma\bigl(\sigma^n\iota(x).\sigma^n\iota(y)\bigr)\leq\alpha.
 $$
 Hence $x=y$ because $\alpha$ is an expansivity constant for $f$. Thus $\mu$ is injective.

 Finally, to prove the shadowing condition of Definition \ref{def:envelope} let $\varepsilon>0$ be given. As $\sigma_R$ has the shadowing property on $q(\Set^\rho)$ there exists $\delta'>0$ such that every $\delta'$-pseudo orbit of $\sigma_R$ belonging to $q(\Set^\rho)$ is $\varepsilon$-shadowable. By the continuity of $\mu$ there exists $\delta''>0$ such that if $x,y\in M$ and $d(x,y)<\delta''$ then $d_R\bigl(\mu(x),\mu(y)\bigr)<\delta'$. Let $\delta=\min\{\delta'',\rho\}$. If $\xi$ is a $\delta$-pseudo orbit of $f$ then on one hand $\xi\in\Sigma^\rho$ because $\delta\leq\rho$, so that $\mu(\xi)\in q(\Set^\rho)$. On the other hand, as $\delta\leq\delta''$ we have that $\mu(\xi)$ is a $\delta'$-pseudo orbit of $\sigma_R$. Hence $\mu(\xi)$ is $\varepsilon$-shadowable. This ends the proof.
\end{proof}

\section*{Acknowledgments}

We would thank to Alfonso Artigue and José Vieitez who have actively participated in this research. We also thank to the referee for the useful comments and suggestions on the original manuscript.

\medskip

Received xxxx 20xx; revised xxxx 20xx.

\medskip

\end{document}